\newcommand{\argmin}{\mathop{\arg\!\min}}
\def \R {\mathbb R}
\newcommand{\EndProof}{\begin{flushright}$\square$\end{flushright}}
\newcommand{\EE}{\mathbf{E}}
\def\R{\mathbb{R}}
\def\R{\mathbb R}
\def\EE{\mathbb E}
\begin{document}
\title{Linearly Convergent Gradient-Free Methods for Minimization of Parabolic Approximation\thanks{
The research of A. Beznosikov and A. Gasnikov was supported by Russian Science Foundation (project No. 21-71-30005). This work was partially conducted while A. Beznosikov was on the project internship in Sirius University of Science and Technology.}}
\titlerunning{Linearly Convergent Methods for Quadratic Approximation}
%
\author{
Aleksandra Bazarova\inst{1}
Aleksandr Beznosikov\inst{1,2}\and
Alexander Gasnikov\inst{1,3,4}}
\authorrunning{A. Bazarova, A. Beznosikov  and A. Gasnikov}
%
\institute{Moscow Institute of Physics and Technology, Russia \and
Higher School of Economics, Russia \and
Institute for Information Transmission Problems RAS, Russia  \and
Caucasus Mathematical Center, Adyghe State University, Russia}
\maketitle              
\begin{abstract}
Finding the global minimum of non-convex functions is one of the main and most difficult problems in modern optimization. 
In the first part of the paper, we consider a certain class of "good"{} non-convex functions that can be bounded above and below by a parabolic function. We show that using only the zeroth-order oracle, one can obtain the linear speed $\log \left(\nicefrac{1}{\varepsilon}\right)$ of finding the global minimum on a cube. The second part of the paper looks at the nonconvex problem in a slightly different way. We assume that minimizing the quadratic function, but at the same time we have access to a zeroth-order oracle with noise and this noise is proportional to the distance to the solution. Dealing with such noise assumptions for gradient-free methods is new in the literature. We show that here it is also possible to achieve the linear rate of convergence.

\keywords{zeroth-order optimization \and non-convex problem \and linear rate}
\end{abstract}

\section{Introduction}


Methods for minimizing convex functions are well researched in the literature \cite{boyd_vandenberghe_2018,NoceWrig06} and have good guarantees of convergence to a solution. When the objective function is non-convex, the problem becomes much more complicated. Meanwhile, the ability to find the global minimum of non-convex functions is an equally important issue, but in general, this is NP-hard. The main idea of constructing an analysis around non-convex functions is the introduction of some restrictions on the problem: these can be desire to search not for a global minimum, but only for a local minimum (in the hope that local is good enough) or restrictions on a function on a set of optimization. 

\textbf{In the first part} of the paper, we follow the same way and try to find a global minimum of the function bounded by two parabolic functions 
More formally, our statement of the problem can be described as follows:
\begin{equation}
    \label{min}
    \min_{x \in C} f(x),
\end{equation}
where set $C$ is a cube in $\R^d$, i.e. for all $x \in C$: $l_i \leq x_i \leq u_i$ with $i$ from $1$ to $d$. We do not know whether $f(x)$ is convex, smooth, whether its gradient is bounded or not. In general, the function can be any, including non-convex and non-differentiable. But we assume that the function $f (x)$ satisfies the following condition for all $x \in C$:
\begin{equation}
    \label{good}
    \frac{\mu}{2} \|x - x^* \|^2 \leq f(x) - f(x^*) \leq \frac{L}{2} \|x - x^* \|^2.
\end{equation}
Hereinafter, $x^*$ is the solution to problem \eqref{min} and we use the ordinary Euclidean norm $\| \cdot \|$. Inequalities \eqref{good} define the "good" class. Such condition describes a rather large set of functions that has a global minimum on the cube: \\
\begin{figure}[h]
\centering
\includegraphics[width =  0.96\linewidth]{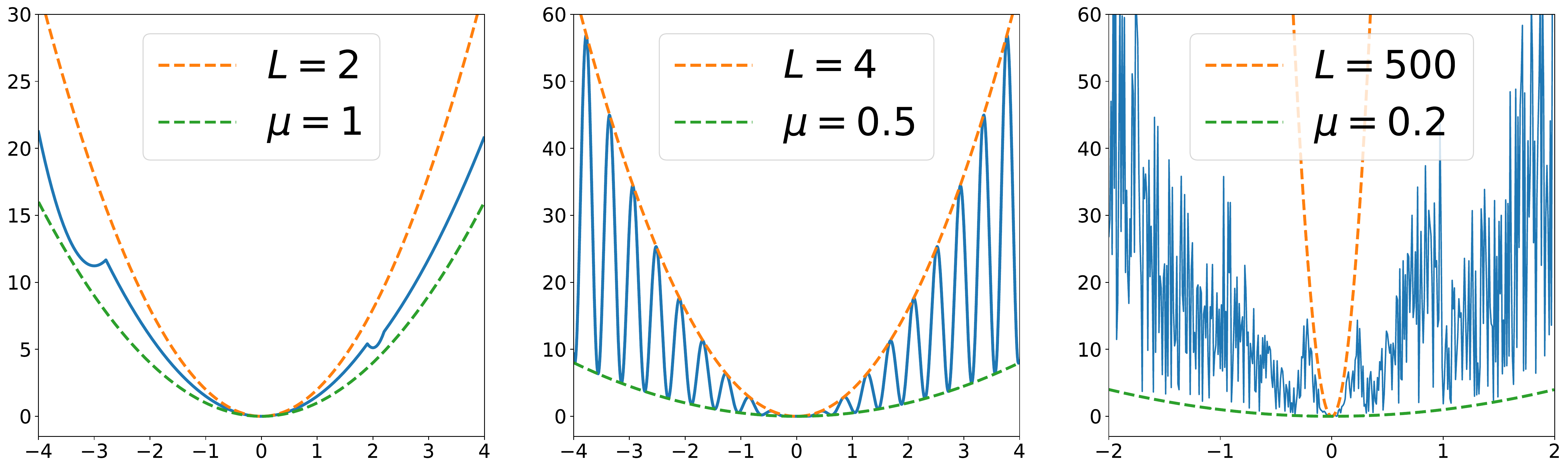}
\caption{Examples of functions that satisfy condition \eqref{good} with different constants $L$ and $\mu$. From left to right, the ratio $\nicefrac{L}{\mu}$ increases.}
\label{fig:1}
\end{figure}

One can note that using a first-order oracle (gradient) for such functions is not a good idea. Since, due to possible large and sharp oscillations, the gradient does not carry any useful global information. Thus, the methods outlined in this paper rely exclusively on the zeroth-order oracles. 

It seems natural that if the constant $L$ is too large or/and the constant $\mu$ is too small, then the search for the solution becomes more difficult. Therefore, we propose another class of "very good"{} functions for which the constants $L$ and $\mu$ differ but not much:
\begin{equation}
    \label{good_good}
    f(x) - f(x^*) = \left(\frac{M}{2}  + \delta(x) \right) \| x - x^*\|^2_2, ~~\text{with } |\delta(x)| \leq \Delta = \frac{M}{16(d-1)},
\end{equation}
for all $x \in C$.
Such functions are quite quadratic, but they can fluctuate with a level of deviation equal to $\delta(x)$. It is easy to see that the condition \eqref{good} is satisfied with $L = \nicefrac{M}{2} + \nicefrac{M}{16(d-1)}$ and $\mu = \nicefrac{M}{2} - \nicefrac{M}{16(d-1)}$.

\textbf{In the second part} of the paper we look at the "good" functions described above in other way: let us be given a parabolic function, but the oracle returns not the exact value of this function, but with noise:
$$f(x) =  \tilde L \|x-x^*\|^2 + \xi(x) + \delta(x).$$
Here, $\xi$ is responsible for stochastic (random) noise, and $\delta$ -- for deterministic noise.
Moreover, for the functions \eqref{good} and \eqref{good_good} $\xi = 0$, and $\delta(x) \sim \|x-x^* \|^2$. In this part of the work we consider a slightly different concept, namely
\begin{equation}
    \label{good_bad}
    f(x, \xi) = \frac{1}{2}(x-x^*)^TA (x-x^*) + (\xi + \delta(x)) \|x-x^* \|,
\end{equation}
where $A \succ 0$. Having only this information about the function, we want to solve the problem:
\begin{equation}
    \label{min_q}
    \min_{x \in \R^d} \frac{1}{2}(x-x^*)^TA (x-x^*). 
\end{equation}
For this one can reconstruct the real gradient using finite differences:
\begin{equation}
\label{grad_fd}
\frac{d}{2\tau} (f(x+\tau e, \xi^+) - f(x-\tau e, \xi^-))e,
\end{equation}
where $e$ -- some random vector uniformly distributed on the Euclidean sphere. Then such a gradient approximation can be used in Gradient Descent, which it does.

\subsection{Our contribution and related works}


Let's start with a discussion with ideas of \textbf{the first part}.

There are already results in the literature where minimization of some specific class of non-convex functions is considered. In \cite{shor2012minimization,polyak1987introduction}, the objective function is non-convex, but at the same time, it is bounded from below and above by some "good"{} functions. Essentially, there is a similar approach in \cite{NIPS2015_5841}, they consider gradient-free minimization of convex functions, but additionally, assume that the zeroth-order oracle takes values of the function with noise. This concept is suitable for non-convex problems in which the objective function "oscillates" around some convex function.

The idea of our method is remotely similar to the simplest zeroth-order methods for minimizing one-dimensional unimodal functions: we calculate the value of the function at some points, and then, using this information, we decrease the optimization set by a certain number of times.

Also, our methods are partly close to the Monte Carlo type algorithms \cite{book}. These methods are also suitable for non-convex optimization problems and exploit the idea of Markov search for a solution. However, they also require the problem to be "good" enough. Our methods also do some kind of search, which is not stochastic but simply uses information about the "goodness" of the function. It is interesting to note that one of our methods has exponential growth depending on the dimension $d$, as well as ones from \cite{book}. 


We propose an algorithm for finding the global minimum on a cube for the function \eqref{good}. It requires $\log \left(\nicefrac{1}{\varepsilon}\right)$ iterations and at each iteration the zeroth-order oracle is called $\mathcal{O}\left(\left(\nicefrac{Ld}{\mu}\right)^d\right)$ times. The main idea of this algorithm is that we split a large cube into many small cubes and calculate the function value in each of them. Then we find the minimum value among all the cubes. It can be shown that the real minimum of the problem lies not far from the found point. Therefore, the edge of the original cube can be cut in half, and we can consider a new cube with the center at the found point. 
Such an algorithm is specifically capable in practice in low-dimensional problems, where the ratio $\nicefrac{L}{\mu}$ can be quite large. See details in Section \ref{bad}.

For class of functions \eqref{good_good}, we propose a less demanding algorithm, it also has a linear convergence rate $\log \left(\nicefrac{1}{\varepsilon}\right)$, but the complexity of its iteration is only $\mathcal{O}(d)$. In this case, at each subiteration of the algorithm, we take one of the variables $x_i$, and equate the rest ($x_j$ with $j \neq i$) with the average value and fix it. And for the variable $x_i$, we request the value of the function at $n$ points, uniformly distributed from $l_i$ to $u_i$. Next, we find the minimum among these $n$ points. Then the $i$th edge of the cube can be halved, and we can consider a new edge centered at the found minimum point. This algorithm shows itself well in practice and for large-scale problems. For more details see Section \ref{goood}.

\vspace{\baselineskip}

The approach suggested in the first part is not very popular in modern research. \textbf{The second part}  follows current trends and works with the concept of gradient reconstruction through finite differences, which well studied in literature \cite{Shamir15,Nesterov}. 

As mentioned above, the idea of an inexact oracle is also used here.  It is important to note that there are two different random variables $\xi^+$ and $\xi^-$ in \eqref{grad_fd}. Approximations of this type are referred to as one-point feedback \cite{akhavan2020exploiting,gasnikov2017stochastic,zhang2020improving,novitskii2021improved} (compare to two-point feedback from \cite{aleks2020gradientfree,Beznosikov}, where $\xi^+ = \xi^-$). The concept of a one-point feedback is less friendly from the point of view of theoretical analysis, but more realizable from the point of view of practice, since in a real problem it is difficult to achieve the value of a function in two different points with the same realization of a random variable.

In the analysis that is present in the literature (for one or two-point feedbacks) \cite{NIPS2016_186fb23a,akhavan2020exploiting,Beznosikov,gasnikov2017stochastic}, it is assumed that the noise (or its second moment) is uniformly bounded. In our setting, the noise depends on the distance to the solution -- this is the main novelty of our problem statement. 

We show that under certain conditions on the noise level and the correct choice of the parameters $\tau$ in \eqref{grad_fd} and the step of the Gradient Descent, it is also possible to achieve a linear convergence rate.

\section{"Good"{} functions} \label{bad}

In this section we concentrate on functions from \eqref{good}. For a better understanding of the method, we present a sketch of the analysis of the algorithm for the one-dimensional case.

\subsection{Method intuition on a segment}

Suppose we have a function $f: [l;u] \to \R$ and it satisfies condition \eqref{good}. Then let consider the following procedure, which we called {\tt Bad Binary Search} (or {\tt BBS}\\
\begin{minipage}{0.45\textwidth}
     \begin{algorithm}[H]
\caption{{\tt BBS}}
	\label{alg1}
\begin{algorithmic}
\State 
\noindent {\bf Input:} Accuracy $\varepsilon$, parameters $L$, $\mu$ from \eqref{good} and bounds $l$, $u$.
\State Let $b := l$, $B := u$ and $n :=2\left\lceil\sqrt{\frac{L}{\mu}}\right\rceil$.
\While {$B - b \geq 2\varepsilon$}
    \begin{eqnarray*}
    &&i^* := \argmin_{i \in \{0, \ldots, n\}} f\left(b + i \cdot \nicefrac{(B-b)}{n}\right),\\
    &&b := \max\left(b; ~b + \left(i^* - \nicefrac{n}{4}\right) \cdot \nicefrac{(B-b)}{n}\right), \\
    &&B := \min\left(B;~b + \left(i^* + \nicefrac{n}{4}\right) \cdot \nicefrac{(B-b)}{n}\right).
    \end{eqnarray*}
\EndWhile
\State 
\noindent {\bf Output:} $\nicefrac{(B-b)}{2}$.
\end{algorithmic}
\end{algorithm}
\end{minipage}
\begin{minipage}{0.05\textwidth}
\end{minipage}
\begin{minipage}{0.52\textwidth}
       for short). The essence of this procedure is very simple. At each iteration of the algorithm, we divide the current segment into $n$ parts and calculate the value of the function at the ends of these segments ($n + 1$ calculations in total). Next, we find the minimum among these $n + 1$ values. It seems that the found point should lie somewhere close to the solution, but this is not entirely true -- it depends on $n$. We claim that it is possible to choose such $n$ that the real minimum will lie in the vicinity of the found minimum. For this we turn to Figure 2: the blue line corresponds to the values of the function, the orange and green lines are the bounding
\end{minipage}
parabolas, the algorithm calculates the value of the function at black points, the minimum level is reached at the point $x^{min}$, $x^*$ is the real minimum of the function, and the point $x^{cl}$ is the closest point to it. 
\begin{figure}[h]
\centering
\includegraphics[width =  0.86\linewidth]{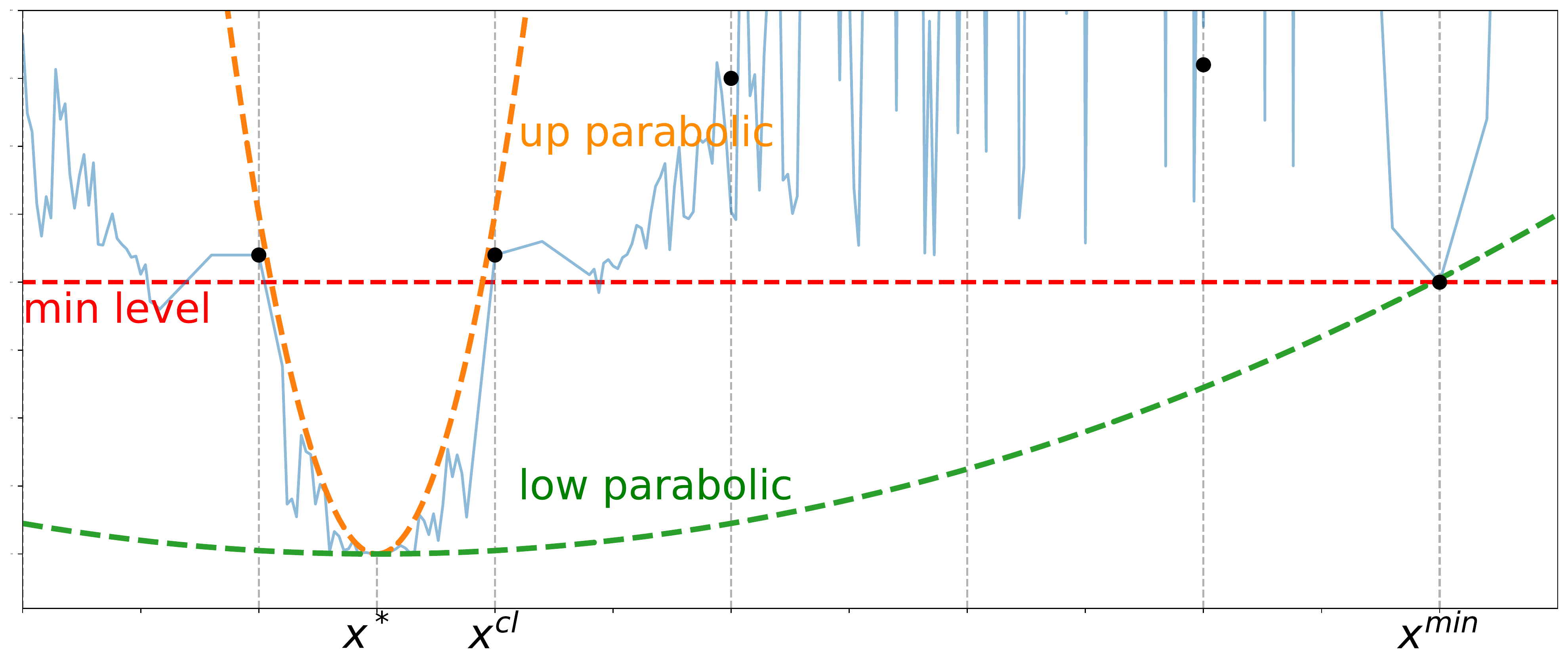}
\caption{An illustration of the reasoning about the correctness of Algorithm \ref{alg1}. Read the description of notations in the text}
\label{fig:2}
\end{figure}\\
We want $\nicefrac{|x^{min} - x^*|}{u - l} \leq \nicefrac{1}{4}$ (where $l,u$ -- current bounds of the segment), which means that we can essentially cut our segment in half and consider a new segment centered at the found minimum. 

Since the value at the point $x^{min}$ is the minimum among all the others, then
\begin{equation*}
    \frac{\mu}{2} (x^{min} - x^*)^2 \leq f(x^{min}) \leq f(x^{cl}) \leq \frac{L}{2} (x^{cl} - x^*)^2.
\end{equation*}
Next, we use the facts that the length of the small segment is $\nicefrac{u-l}{n}$, and the distance between $x^*$ and $x_{cl}$ is no more than $\nicefrac{u-l}{2n}$:
\begin{equation*}
    \frac{\mu}{2} (x^{min} - x^*)^2 \leq \frac{L}{2} (x^{cl} - x^*)^2  \leq \frac{L}{8n^2} (u-l)^2.
\end{equation*}
From where we instantly get 
\begin{equation*}
\frac{|x^{min} - x^*|}{u - l} \leq \frac{1}{2n} \sqrt{\frac{L}{\mu}} \leq \frac{1}{4}.
\end{equation*}
The last inequality follows from the requirement $\nicefrac{|x^{min} - x^*|}{u - l} \leq \nicefrac{1}{4}$. Then we get a lower bound on $n$:
\begin{equation*}
n \geq 2 \left\lceil\sqrt{\frac{L}{\mu}}\right\rceil.
\end{equation*}

\textbf{Remark.} Note that in Algorithm \ref{alg1} it is necessary to divide $n$ by $4$, but $n$ may not be divisible by $4$. This does not violate the convergence of the method, but if we take $n$ is a multiple of 4, then it turns out that at the next iteration of the algorithm we already know the value of the function at half the points, since they coincided with the points from the previous iteration.

\subsection{Theoretical convergence in $\mathbb{R}^d$}

In this part of the work, we present and analyze the algorithm in the case when we work in a space of dimension $d$. The multidimensional analog of Algorithm \ref{alg1} in the following way:\\
\begin{minipage}{1\textwidth}
     \begin{algorithm}[H]
\caption{{\tt Multi BBS}}
	\label{alg2}
\begin{algorithmic}
\State 
\noindent {\bf Input:} Accuracy $\varepsilon$, parameters $L$, $\mu$ from \eqref{good} and bounds $\vec{l} = (l_1, \dots l_d)$, $\vec{u} = (u_1, \dots u_d)$.
\State Let $n := \alpha \left\lceil\sqrt{\nicefrac{dL}{\mu}}\right\rceil$,\\
$b :=  l$ and $B :=  u$.
\While {$\|B - b \| \geq \varepsilon$}
    \begin{eqnarray*}
    r &:=& \max_{i \in \{1,\ldots d \}} \nicefrac{(B_i - b_i)}{n}\\
    S_j &:=& \{0, 1,\ldots, \lfloor \nicefrac{(B_j - b_j)}{r} \rfloor\}, ~~~~ S := S_1 \times \ldots \times S_d,\\
    i^* &:=& \argmin_{i \in S} f\left(b + i \cdot r \right),\\
    b &:=& \left\{b_j := \max\left(b_j; ~b_j + \left(i^*_j - \nicefrac{n}{2\alpha}\right) \cdot r\right), ~ j \in \{1,\ldots d\} \right\}, \\
    B &:=&  \left\{B_j := \min\left(B_j; ~B_j + \left(i^*_j + \nicefrac{n}{2\alpha}\right) \cdot r\right), ~ j \in \{1,\ldots d\} \right\}.
    \end{eqnarray*}
\EndWhile
\State 
\noindent {\bf Output:} $\nicefrac{B-b}{2}$.
\end{algorithmic}
\end{algorithm}
\end{minipage}

This algorithm is more complicated than Algorithm \ref{alg1}, this is due to the fact that we are working with a cube, moreover, this cube may have edges of unequal length. Also, here we generalize the approach for picking $n$ and introduce the parameter $\alpha > 1$. 

The idea of {\tt Multi BBS} repeats the idea of the one-dimensional {\tt BBS} algorithm. We also split our cube into small pieces, calculate values at the points on these pieces, and move on to a new smaller cube centered at the minimum of the selected values. It is important to note the size of the pieces into which the original cube is split: all the edges of each small cube have the same length, and this length is determined by the length of the longest edge of the original cube (see the first line of the main loop). This approach allows us to optimize variables that have a large spread (long edge) first.
Thereby if one of the edges of the original cube were much larger than the other ones, then, in fact, only the variable responsible for this edge would be the one to be optimized. The described strategy equalizes the sizes of all cube edges fast, and this is better than splitting large cube edges into small pieces at once. 

\begin{theorem}
{\tt Multi BBS} algorithm with $\alpha > 1$ converges to the global minimum of the function \eqref{good}. Moreover, the value equal to the maximum length of the cube edge decreases by at least $\alpha$ times at each iteration. 
\end{theorem}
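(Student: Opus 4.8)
The plan is to mimic the one-dimensional analysis, carefully accounting for the fact that the cube may have edges of unequal length and that the small cubes are all of the same edge-length $r$, chosen as the largest current edge divided by $n$. First I would fix an iteration, write $b,B$ for the current bounds, and let $x^{min} = b + i^* r$ be the grid point achieving the minimal sampled value. The key geometric fact is that every point of the current cube lies within Euclidean distance $\tfrac{\sqrt d}{2} r$ of some grid point in $S$ (since each coordinate is covered by a grid spacing of $r$, except possibly a shorter leftover piece at the far end, which is still $\le r$); in particular there is a grid point $x^{cl} \in S$ with $\|x^{cl} - x^*\| \le \tfrac{\sqrt d}{2} r$. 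I would then chain the two inequalities from \eqref{good}: $\tfrac{\mu}{2}\|x^{min} - x^*\|^2 \le f(x^{min}) - f(x^*) \le f(x^{cl}) - f(x^*) \le \tfrac{L}{2}\|x^{cl} - x^*\|^2 \le \tfrac{L}{2}\cdot\tfrac{d}{4}r^2$, which gives $\|x^{min} - x^*\| \le \tfrac{r\sqrt d}{2}\sqrt{L/\mu}$.

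Next I would relate this to the longest edge. Let $R := \max_i (B_i - b_i)$ be the longest current edge, so $r = R/n$ and, with $n = \alpha\lceil\sqrt{dL/\mu}\rceil \ge \alpha\sqrt{dL/\mu}$, we get $\|x^{min} - x^*\| \le \tfrac{R}{2\alpha}$. This is the multidimensional analogue of the $1/4$-bound: in particular $|x^{min}_j - x^*_j| \le \tfrac{R}{2\alpha}$ for every coordinate $j$. The update sets the new $j$-th interval to be $[\,x^{min}_j - \tfrac{n}{2\alpha} r,\ x^{min}_j + \tfrac{n}{2\alpha} r\,]$ intersected with the old one; since $\tfrac{n}{2\alpha} r = \tfrac{R}{2\alpha}$, this new interval contains $x^*_j$ (using that $x^*$ lies in the current cube, which is preserved inductively), so $x^*$ stays in the cube — this is the correctness/feasibility part of the claim. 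I would also note the new $j$-th edge has length at most $\tfrac{n}{\alpha} r = \tfrac{R}{\alpha}$, hence the new longest edge is at most $R/\alpha$, i.e. the maximal edge shrinks by a factor $\ge \alpha$ per iteration, which is the convergence statement; iterating, $\|B-b\|$ drops below $\varepsilon$ after $O(\log(1/\varepsilon))$ iterations and the output $ (B-b)/2$ is within $\varepsilon$ of $x^*$.

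The main obstacle, and the place to be careful, is the non-uniform grid: because $r$ is set by the \emph{longest} edge, a shorter edge $j$ is divided into $\lfloor (B_j-b_j)/r\rfloor$ full steps of length $r$ plus a remainder strictly less than $r$, so the grid $S_j$ does not reach $B_j$ exactly. I need to confirm that the "nearest grid point" bound $|x^{cl}_j - x^*_j| \le r/2$ still holds even when $x^*_j$ sits in that trailing remainder segment — it does, since any point of an interval of length $\le r$ anchored at $b_j$ is within $r/2$... actually within the remainder one only controls distance to the last grid point by the remainder length $\le r$, so the cleanest bound is $|x^{cl}_j - x^*_j| \le r/2$ when $x^*_j$ is in a full cell and $\le r$ when in the leftover; either way $\|x^{cl} - x^*\|^2 \le d r^2$ suffices (costing a harmless factor of $2$, absorbed by $\alpha>1$ and the ceiling). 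A second minor point is the boundary truncation by $\max/\min$ with the old bounds: this only shrinks the cube, so it cannot expel $x^*$ and cannot enlarge any edge, so both parts of the claim survive. Finally I would remark that $\alpha>1$ strict is needed precisely so that $\tfrac{n}{2\alpha} r = R/(2\alpha) < R/2$ leaves genuine room, guaranteeing the edge genuinely contracts rather than merely failing to grow.
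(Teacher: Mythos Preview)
Your argument is essentially identical to the paper's: chain \eqref{good} through $f(x^{min})\le f(x^{cl})$ to obtain $|x^{min}_j-x^*_j|\le R/(2\alpha)$ for every coordinate, and conclude that the new longest edge is at most $R/\alpha$. You are in fact more careful than the paper about two points it glosses over --- the inductive invariant that $x^*$ remains in the current cube, and the leftover partial cell at the far end of a short edge (the paper simply asserts $(x^{cl}_i-x^*_i)^2\le r^2/4$ without addressing this; your worry is legitimate, though the claim that the resulting constant is ``absorbed by $\alpha>1$ and the ceiling'' would need a cleaner justification).
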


\begin{proof}
Let's introduce the same notation as in the previous subsection: $x_{min}$ is the point with the minimum value among the selected, $x^*$ is the real minimum of the function, and the point $x_{cl}$ is the closest point to it. Then
\begin{equation*}
    \frac{\mu}{2} \sum\limits_{i = 1}^d (x^{min}_i - x_i^*)^2 \leq f(x^{min}) \leq f(x^{cl}) \leq \frac{L}{2} \sum\limits_{i = 1}^d (x^{cl}_i - x_i^*)^2.
\end{equation*}
Since the edge of a small cube is at most $r$, then for all $i$
\begin{equation*}
(x^{cl}_i - x_i^*)^2 \leq  \frac{r^2}{4},
\end{equation*}
and 
\begin{equation*}
    \frac{\mu}{2} \sum\limits_{i = 1}^d (x^{min}_i - x_i^*)^2 \leq \frac{dLr^2}{8} = \frac{dL}{8n^2} \left(\max_{i \in \{1,\ldots d \}} [B_i - b_i]\right)^2.
\end{equation*}
It is easy to see that with $n = \alpha \left\lceil\sqrt{\nicefrac{dL}{\mu}}\right\rceil$, we get
\begin{equation*}
    \sum\limits_{i = 1}^d (x^{min}_i - x_i^*)^2  \leq \frac{1}{4\alpha^2} \left(\max_{i \in \{1,\ldots d \}} [B_i - b_i]\right)^2.
\end{equation*}
Whence it follows that for all $i$
\begin{equation*}
    \frac{|x^{min}_i - x_i^*|}{\max_{i \in \{1,\ldots d \}} [B_i - b_i]}  \leq \frac{1}{2\alpha} .
\end{equation*}
This inequality ensures that the maximum edge length of the new cube is (at least) $\alpha$ times less than the maximum edge length of the old one.
\EndProof
\end{proof}

The theorem implies the following corollary on the complexity of the algorithm:

\begin{corollary}
Algorithm \ref{alg2} requires $\mathcal{O}\left(\log_{\alpha}\left(\nicefrac{1}{\varepsilon}\right)\right)$ iterations to find a solution (in terms of Algorithm 2). Moreover, the oracle complexity of each iteration is $\mathcal{O}\left(\left(\alpha \sqrt{\nicefrac{dL}{\mu}}\right)^d\right)$.
\end{corollary}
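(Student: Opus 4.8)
The plan is to derive the corollary directly from the theorem just proved, which states that the maximum edge length of the cube decreases by at least a factor of $\alpha$ per iteration. First I would set $D_0 = \max_i (u_i - l_i)$, the initial maximum edge length, and let $D_k$ denote the maximum edge length after iteration $k$. By the theorem, $D_k \leq D_0 / \alpha^k$. The stopping condition is $\|B - b\| \geq \varepsilon$; since $\|B - b\| \leq \sqrt{d}\, D_k$, the loop is guaranteed to terminate once $\sqrt{d}\, D_0/\alpha^k < \varepsilon$, i.e. once $k > \log_\alpha(\sqrt{d}\, D_0/\varepsilon)$. Hence the number of iterations is $\mathcal{O}\!\left(\log_\alpha(1/\varepsilon)\right)$, absorbing the $\sqrt{d}\, D_0$ factor into the constant (or the logarithm's additive term).

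Next I would count the oracle calls per iteration. At a given iteration we query $f$ at all points $b + i \cdot r$ with $i \in S = S_1 \times \dots \times S_d$, so the number of calls is $\prod_{j=1}^d (|S_j|)= \prod_{j=1}^d \big(\lfloor (B_j - b_j)/r \rfloor + 1\big)$. Here $r = \max_i (B_i - b_i)/n$ is chosen precisely so that $(B_j - b_j)/r \leq n$ for every $j$, hence $\lfloor (B_j - b_j)/r \rfloor + 1 \leq n + 1$. Therefore the per-iteration oracle complexity is at most $(n+1)^d$, and substituting $n = \alpha\lceil\sqrt{dL/\mu}\rceil$ gives $\mathcal{O}\!\left(\big(\alpha\sqrt{dL/\mu}\big)^d\right)$, as claimed.

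The only mild subtlety — and the step I would be most careful about — is the termination argument, since the theorem guarantees geometric decrease of the \emph{maximum} edge, but the while-loop tests the full Euclidean diagonal $\|B-b\|$. One must observe that $\|B - b\| \le \sqrt d \cdot \max_i(B_i-b_i)$, so once the maximum edge drops below $\varepsilon/\sqrt d$ the loop necessarily exits; this only changes the iteration count by an additive $\tfrac12\log_\alpha d$ term, which is hidden inside the $\mathcal{O}$. The bound $|S_j| \le n+1$ likewise requires noting that it is exactly the choice of $r$ via the maximal edge that controls every $|S_j|$ simultaneously, which is the design feature emphasized in the discussion preceding the theorem. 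No further obstacles arise: both claims follow by elementary arithmetic once these two observations are in place.
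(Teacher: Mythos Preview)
Your proposal is correct and follows essentially the same approach as the paper: the paper also bounds $\|B^T-b^T\|^2 \le d\bigl(\max_i (B_i^T-b_i^T)\bigr)^2 \le d\bigl(\max_i(u_i-l_i)/\alpha^T\bigr)^2$ to obtain the iteration count, and then observes that in the worst case (all edges equal) each iteration requires $\mathcal{O}(n^d)$ function evaluations. Your write-up is in fact more careful than the paper's, explicitly justifying the bound $|S_j|\le n+1$ via the definition of $r$ and noting the additive $\tfrac12\log_\alpha d$ term hidden in the $\mathcal{O}$.
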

\begin{proof}

To prove the first statement, we write the simple chain for $B$ and $b$ after $T$ iteration:
\begin{equation*}
    \|B^T - b^T \|^2 \leq d \left(\max_{i \in \{1,\ldots d \}} B^T_i - b^T_i \right)^2 \leq d \left(\frac{ \max_{i \in \{1,\ldots d \}} [u_i - l_i]}{\alpha^T} \right)^2 \leq \varepsilon^2,
\end{equation*}
where $u$ and $l$ -- starting cube boundaries. This implies the required statement. 

The second statement follows from the fact that in the worst case (when all the edges of the cube are equal) we need to calculate the value of the function at $\mathcal{O}(n^d)$ points.
\EndProof
\end{proof}
\textbf{Remark.} The complexity of one iteration increases dramatically with the growth of the dimension; therefore, this method is proposed to be used for solving low-dimensional problems.

\subsection{Small dimension numerical experiments}

We give examples of how the algorithm works on low-dimensional problems: one-dimensional and two-dimensional. 
First, consider the following function:
\begin{equation}
\label{exp_f_1}
f(x) = 10 (x - 2)^2 - 4 \cos[17(x-2)] + 4, ~~~~ x \in [0, 6.5].
\end{equation}
The global minimum on this segment is the point $x^* = 2$. We take $L = 600$, $\mu = 10$. Starting point is the center of the segment $x^0 = 3.25$. {\tt Multi BBS} algorithm starts with $\alpha = 1.5, 2, 3, 4$. The convergence of the algorithm is shown in Figure \ref{fig:3} (a). 

Next, we work with a 2-dimensional problem -- the Levy function:
\begin{eqnarray}
\label{exp_f_2}
f(x,y) &=& \sin^2[3\pi (x- 2.7)] + (x-3.7)^2 (1 + \sin^2[3\pi (y - 0.3)]) \\
&&+ (y-1.3)^2(1 + \sin^2[2\pi (y - 0.3)]),
\end{eqnarray}
where $x,y \in [-10, 10]$. The optimal point: $f(3.7, 1.3) = 0$ and $L = 150$, $\mu = 1$. For the trajectory of convergence, see Figure \ref{fig:3} (b).\\
\begin{figure}[h]
\includegraphics[width =  \linewidth]{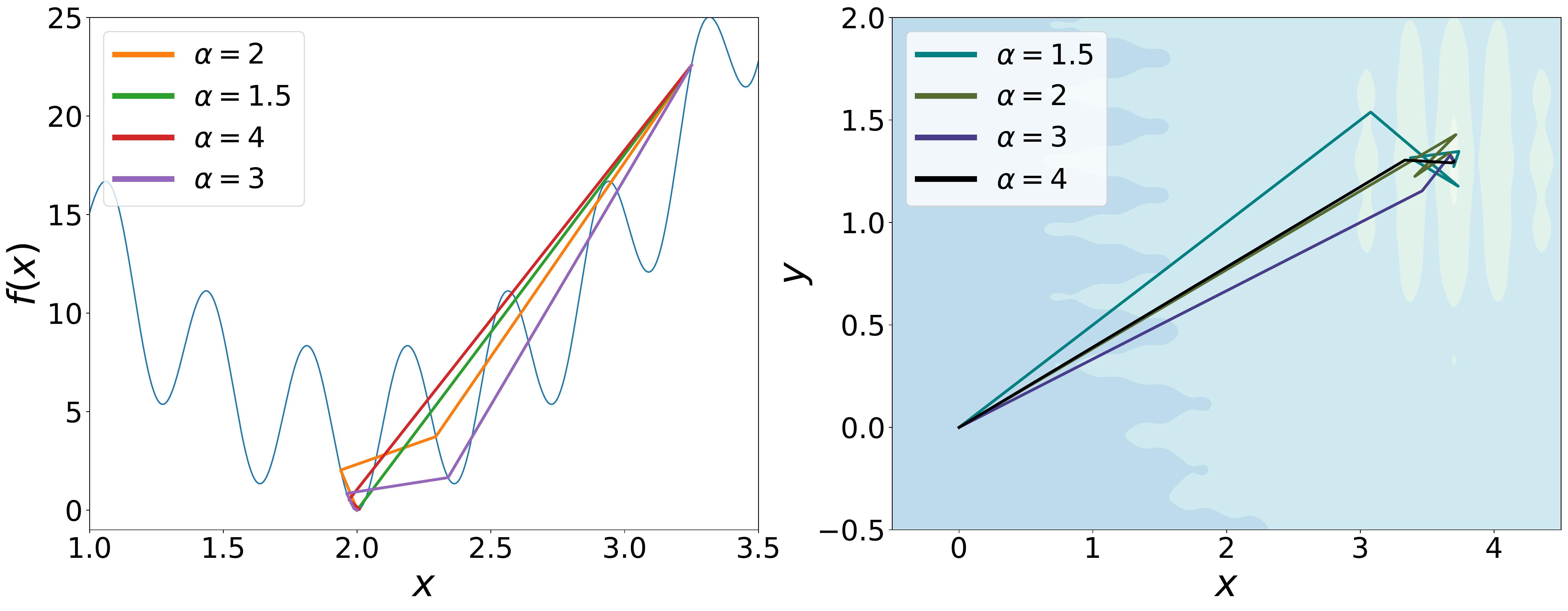}
\begin{minipage}{0.5\textwidth}
\begin{center}
~~~~~(a)
\end{center}
\end{minipage}
\begin{minipage}{0.5\textwidth}
\begin{center}
~~~~~~(b)
\end{center}
\end{minipage}
\caption{Convergence of the {\tt Multi BBS} algorithm for problems of dimensions 1 and 2: (a) problem \eqref{exp_f_1}, (b) problem \eqref{exp_f_2}.}
\label{fig:3}
\end{figure}

\section{Very "good"{} functions} \label{goood}

In this section we analyze functions $f(x)$ which meet the requirements \eqref{good_good}. For such a problem statement, we present a less demanding version of the {\tt BBS} algorithm:\\
\begin{minipage}{1\textwidth}
     \begin{algorithm}[H]
\caption{{\tt Direction BBS}}
	\label{alg3}
\begin{algorithmic}
\State 
\noindent {\bf Input:} Accuracy $\varepsilon$, parameters $M$, $\Delta$ from \eqref{good_good} and bounds $l = (l_1, \dots l_d)$, $u = (u_1, \dots u_d)$.
\State Let $n := 15$,
\State $b :=  l$, $B :=  u$ and $m := \nicefrac{l + u}{2}$.
\While {$\|B - u\| \geq 2\varepsilon$}
    \For {$i = 1,\ldots d$}
    \begin{eqnarray*}
    R &:=& \max_{i \in \{1,\ldots d \}} B_i - b_i\\
    j^* &:=& \argmin_{j = 0, \ldots, n} f\left(m_1,\ldots, m_{i-1}, b_i + j \cdot \nicefrac{(B_i-b_i)}{n}, m_{i+1}, \ldots m_d\right),\\
    m_i &:=& b_i + j^* \cdot \nicefrac{(B_i-b_i)}{n} \\
    b_i &:=& \max\left(b_i;~m_i - \nicefrac{R}{3}\right), \\
    B_i &:=& \min\left(B_i;~m_i + \nicefrac{R}{3}\right).
    \end{eqnarray*}
    \EndFor
\EndWhile
\State 
\noindent {\bf Output:} $\nicefrac{B - b}{2}$.
\end{algorithmic}
\end{algorithm}
\end{minipage}

This algorithm no longer draws a "grid" over the entire cube. In this case, at  each external iteration (while), we go through all the variables in turn. At the inner iteration (for), we consider only one variable, while the rest are fixed equal to $m_j$. Then we do the procedure in a similar way to {\tt BBS} -- we divide the current edge and do a one-dimensional search for the minimum among the calculated points. Due to the fact that the problem is very "good"{}, this approach allows the method to converge. This is what the next section is about.

\subsection{Theoretical analysis}

\begin{theorem}
{\tt Direction BBS} algorithm converges to the global minimum of the function \eqref{good_good}. Moreover, the value equal to the maximum length of the cube edge decreases by at least $\nicefrac{3}{2}$ times at each main iteration (an outer loop with while).
\end{theorem}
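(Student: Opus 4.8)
The plan is to mimic the one-dimensional argument from Algorithm \ref{alg1}, but now carefully tracking how the fixed coordinates (set to $m_j$) contaminate the one-dimensional reasoning. Fix an outer iteration and an inner index $i$. Write $y^{min}$ for the point at which the minimum over the $n+1$ grid points is attained and $y^{cl}$ for the grid point whose $i$-th coordinate is closest to $x^*_i$; both agree with the current midpoint vector $m$ in all coordinates $j \neq i$. The key inequality is that the minimum value beats the value at $y^{cl}$:
\begin{equation*}
\left(\frac{M}{2} - \Delta\right)\|y^{min} - x^*\|^2 \leq f(y^{min}) - f(x^*) \leq f(y^{cl}) - f(x^*) \leq \left(\frac{M}{2} + \Delta\right)\|y^{cl} - x^*\|^2.
\end{equation*}

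The complication absent in the segment case is that $\|y^{min} - x^*\|^2$ and $\|y^{cl} - x^*\|^2$ share the same ``off-axis'' part $\sum_{j \neq i}(m_j - x^*_j)^2 =: E$, while differing only in the $i$-th coordinate. So I would write $\|y^{min} - x^*\|^2 = (y^{min}_i - x^*_i)^2 + E$ and $\|y^{cl} - x^*\|^2 = (y^{cl}_i - x^*_i)^2 + E \leq \frac{R^2}{4\cdot ?} + E$ — more precisely $(y^{cl}_i - x^*_i)^2 \leq \left(\frac{B_i - b_i}{2n}\right)^2$, and since $B_i - b_i \leq R$ this is at most $R^2/(2n)^2$. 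Substituting and rearranging, the off-axis term $E$ partly cancels:
\begin{equation*}
\left(\frac{M}{2} - \Delta\right)(y^{min}_i - x^*_i)^2 \leq \left(\frac{M}{2} + \Delta\right)\frac{R^2}{4n^2} + 2\Delta \cdot E.
\end{equation*}
Here is where the bound $\Delta \leq \frac{M}{16(d-1)}$ and the value $n = 15$ enter: I need to control $E = \sum_{j\neq i}(m_j - x^*_j)^2$. By the invariant maintained inductively — each coordinate has already been ``centered'' so that $|m_j - x^*_j|$ is bounded by a fraction of the current edge, ultimately by something like $\frac{R}{2}$ — one gets $E \leq (d-1)\frac{R^2}{4}$ (or a sharper constant from the $R/3$ shrinkage), so $2\Delta E \leq \frac{M}{8(d-1)}\cdot(d-1)\frac{R^2}{4} = \frac{MR^2}{32}$. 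Combined with the first term, and using $\frac{M}{2}-\Delta \geq \frac{M}{2}\cdot\frac{15}{16}$ roughly, one should land on $(y^{min}_i - x^*_i)^2 \leq \frac{R^2}{9}$, i.e. $|y^{min}_i - x^*_i| \leq \frac{R}{3}$. That is exactly the radius used to update $b_i, B_i$, guaranteeing $x^*_i$ stays in the new interval, and since the new $i$-th edge has length at most $\frac{2R}{3}$, the maximum edge length over a full outer pass drops by at least $\frac{3}{2}$.

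The main obstacle — and the step I would be most careful about — is maintaining the inductive invariant on $E = \sum_{j \neq i}(m_j - x^*_j)^2$ across the \emph{inner} loop, because when we process coordinate $i$ the coordinates $j < i$ have already been updated in the current outer pass (so they satisfy a tighter bound, $|m_j - x^*_j| \leq$ a third of the \emph{previous} edge) while coordinates $j > i$ have only the bound inherited from the previous outer pass. One must check that the worst-case mixture of these two bounds still yields $E \leq (d-1)\frac{R^2}{4}$ with $R$ the current maximum edge, so that the choice $n=15$ and $\Delta \leq \frac{M}{16(d-1)}$ suffice; tracking the constants honestly (the $\frac{R}{3}$ versus $\frac{R}{4}$ discrepancy, ceilings, the first iteration where no invariant is yet available and one uses only $|m_j - x^*_j| \leq \frac{u_j - l_j}{2} \leq \frac{R}{2}$) is the delicate bookkeeping. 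Finally, convergence to the global minimum follows because $x^*$ never leaves the cube and the diameter tends to zero geometrically, exactly as in the Corollary following Theorem~1.
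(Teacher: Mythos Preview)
Your proposal is correct and follows essentially the same route as the paper: compare $f$ at the grid minimum to $f$ at the grid point whose $i$-th coordinate is nearest $x^*_i$, split both squared distances into the on-axis term and the shared off-axis term $E=\sum_{j\neq i}(m_j-x^*_j)^2$, bound $E\le (d-1)R^2/4$, and then substitute $n=15$ and $\Delta\le M/(16(d-1))$ to obtain $|x^{min}_i-x^*_i|\le R/3$. The bookkeeping issue you single out---maintaining the invariant $|m_j-x^*_j|\le (B_j-b_j)/2$ across the inner loop---is precisely the step the paper dispatches with ``it is quite obvious''; your caution there is well placed, but otherwise the arguments coincide.
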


\begin{wrapfigure}[15]{r}{0.33\linewidth} 
\vspace{-3ex}
\includegraphics[width =  1\linewidth]{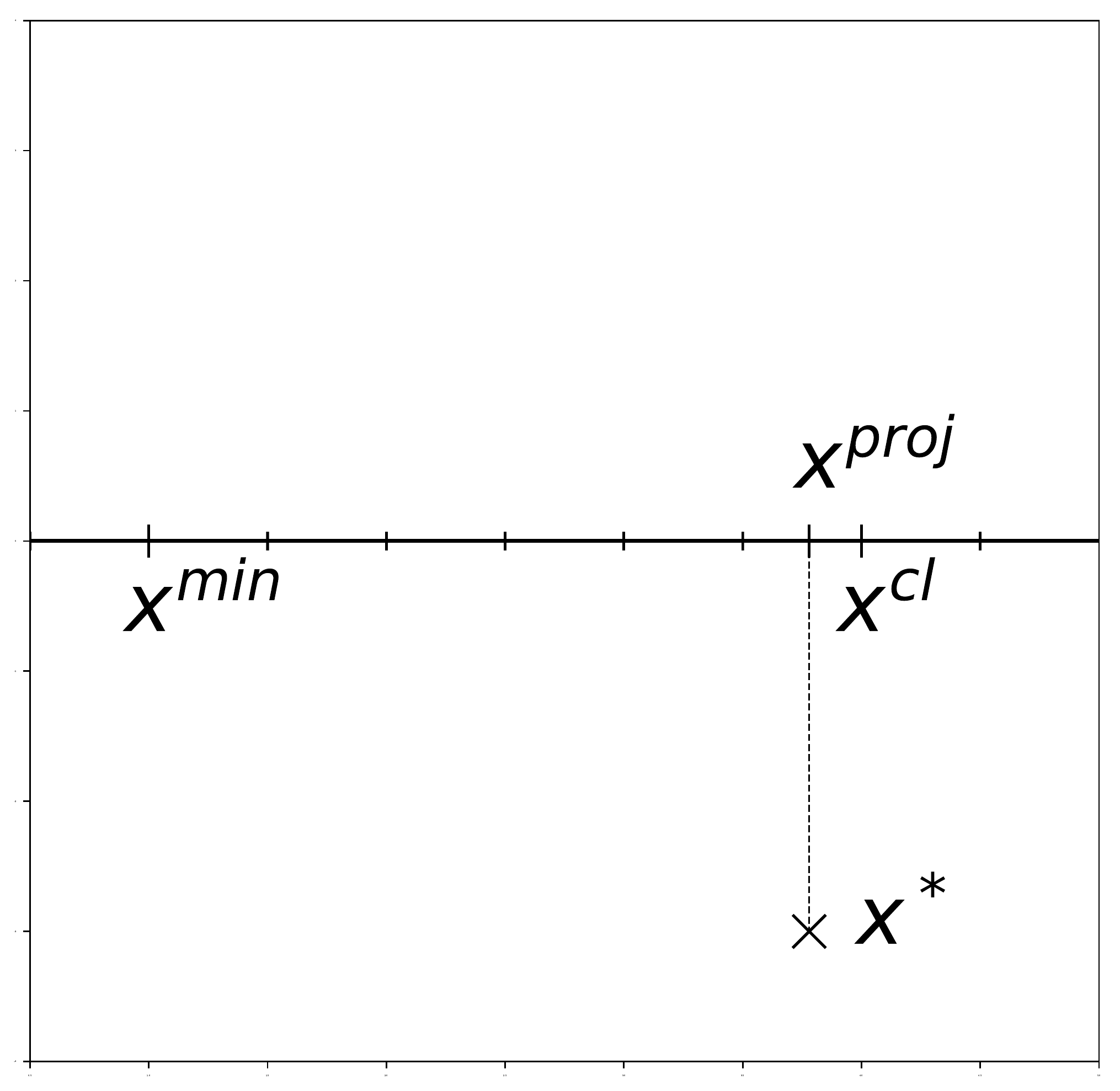}
\caption{Illustration for the proof of Theorem 2.}
\label{fig:6}
\end{wrapfigure}
\textit{Proof.} Consider $i$th iteration of inner loop (with for). Let $x^{min} = ( m_1,\ldots, m_{i-1}, b_i + j^* \cdot \nicefrac{(B_i-b_i)}{n},$
$m_{i+1}, \ldots m_d)$ , $x^{*}$ -- the real minimum, $x^{proj} = \left( m_1,\ldots, m_{i-1}, x^*_i, m_{i+1}, \ldots m_d\right)$, $x^{cl}$ -- the closest point to  $x^{proj}$, where we calculate the value of the function (see Figure \ref{fig:6}). Then the following inequality holds:
\begin{equation*} \label{noname}
    f(x^{min}) \leq f(x^{cl}).
\end{equation*}
Using \eqref{good_good}, we can note that
\begin{eqnarray*}
    \left(\frac{M}{2} + \delta(x^{min})\right)\| x^{min} - x^*\|^2_2  \quad \quad \quad \quad \quad \quad\\
    \leq  \left(\frac{M}{2} + \delta(x^{cl})\right)\| x^{cl} - x^*\|^2_2.
\end{eqnarray*}
By a definition of Euclidean norm, 
\begin{eqnarray*}
    \left(\frac{M}{2} + \delta(x^{min}) \right)\left((x^{min}_i - x^{proj}_i)^2 + \|x^{proj} - x^*\|^2\right) \quad \quad \quad \quad \quad \quad \quad\\
    \leq \left(\frac{M}{2} + \delta(x^{cl}) \right)\left((x^{cl}_i - x^{proj}_i)^2 + \|x^{proj} - x^*\|^2 \right).
\end{eqnarray*}
As we know, $|\delta(x)|$ is not greater than $\Delta$. Therefore,     \begin{eqnarray}
\label{one}
    \left(\frac{M}{2} - \Delta \right)\left((x^{min}_i - x^{proj}_i)^2 + \|x^{proj} - x^*\|^2\right) \nonumber \quad \quad \quad \quad \quad \quad \quad\\
    \leq \left(\frac{M}{2} + \Delta \right)\left((x^{cl}_i - x^{proj}_i)^2 + \|x^{proj} - x^*\|^2 \right).
\end{eqnarray}
Since $x^{cl}_i$ is the closest to $x^{proj}_i$, then
\begin{equation}
    \label{1}
    (x^{cl}_i - x^{proj}_i)^2 \leq \left( \frac{B_i - r_i}{2n}\right)^2 \leq \left( \frac{R}{2n}\right)^2.
\end{equation}
In the last inequality we use the definition of $R$ from Algorithm \ref{alg3}. 
By \eqref{1} and some simple transformation of \eqref{one}, we get
\begin{eqnarray}
\label{2}
    \left(\frac{M}{2} - \Delta \right)(x^{min}_i - x^{proj}_i)^2 
    \leq \left(\frac{M}{2} + \Delta \right)\left( \frac{R}{2n}\right)^2 + 2 \Delta \|x^{proj} - x^*\|^2 .
\end{eqnarray}
Also it is quite obvious that 
\begin{equation} \label{two}
    \|x^{proj} - x^*\|^2 \leq \sum\limits_{j \in \{1, \ldots d\} \backslash \{i\}} \left(\frac{B_j - b_j}{2}\right)^2 \leq \frac{(d - 1)R^2}{4}.
\end{equation}
Combining inequalities \eqref{2},\eqref{two},
\begin{eqnarray*}
    \left(\frac{M}{2} - \Delta \right)(x^{min}_i - x^{proj}_i)^2 
    \leq \left(\frac{M}{2} + \Delta \right)\left( \frac{R}{2n}\right)^2 +  \Delta \frac{(d - 1)R^2}{2} .
\end{eqnarray*}
Then we get the following expression
\begin{eqnarray*}
    (x^{min}_i - x^{proj}_i)^2 
    \leq \frac{\left(M + 2\Delta \right)}{\left(M - 2\Delta \right)} \left( \frac{R}{2n}\right)^2 +  \Delta \frac{(d - 1)R^2}{M - 2\Delta} .
\end{eqnarray*}
Substituting boundaries for $\Delta$ from \eqref{good_good} and $n = 15$, we have
\begin{eqnarray*}
    (x^{min}_i - x^{proj}_i)^2 
    \leq \frac{1 + \nicefrac{1}{8(d-1)} }{1 - \nicefrac{1}{8(d-1)}} \left( \frac{R}{30}\right)^2 +  \frac{R^2}{16 - \nicefrac{2}{d-1}} .
\end{eqnarray*}
Note that $d-1 \geq 1$, then 
\begin{eqnarray}
    \label{3}
    (x^{min}_i - x^{proj}_i)^2 
    \leq \left(\frac{1}{700} +  \frac{1}{14} \right)R^2 \leq \frac{51R^2}{700}.
\end{eqnarray}
Summing up \eqref{1} and \eqref{3}, we get
\begin{eqnarray*}
    |x^{min}_i - x^{cl}_i|
    \leq |x^{min}_i - x^{proj}_i| + |x^{cl}_i - x^{proj}_i| \leq \left(\frac{1}{30} + \sqrt{\frac{51}{700}}\right) R \leq \frac{R}{3}.
\end{eqnarray*}
It means that
\begin{eqnarray*}
    \frac{|x^{min}_i - x^{cl}_i|}{\max\limits_{j \in \{1,\ldots d \}} B_j - b_j} \leq \frac{1}{3}.
\end{eqnarray*}
This inequality ensures that the maximum edge length of the new cube is (at least) $\nicefrac{3}{2}$ times less than the maximum edge length of the old one.
\EndProof

\begin{corollary}
Algorithm \ref{alg3} requires $\mathcal{O}\left(\log_{\nicefrac{3}{2}}\left(\nicefrac{1}{\varepsilon}\right)\right)$ outer iterations to find a solution. Moreover, the oracle complexity of each iteration is $\mathcal{O}\left(d\right)$.
\end{corollary}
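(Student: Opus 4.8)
The plan is to mimic the proof of the corollary for Algorithm~\ref{alg2}, using Theorem~2 as a black box. Theorem~2 guarantees that one full outer iteration (the whole inner \texttt{for}-sweep over $i=1,\dots,d$) shrinks the quantity $R=\max_{i}(B_i-b_i)$ by a factor of at least $\nicefrac{3}{2}$. Hence, writing $b^T,B^T$ for the bounds after $T$ outer iterations and $l,u$ for the initial ones, an immediate induction on $T$ gives
\begin{equation*}
    \max_{i \in \{1,\ldots,d\}} (B^T_i - b^T_i) \leq \frac{\max_{i \in \{1,\ldots,d\}} (u_i - l_i)}{(\nicefrac{3}{2})^T}.
\end{equation*}

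Second, I would pass from this coordinatewise bound to the Euclidean stopping criterion used in the \texttt{while} guard. Since a vector in $\R^d$ has Euclidean norm at most $\sqrt{d}$ times the absolute value of its largest coordinate,
\begin{equation*}
    \|B^T - b^T\| \leq \sqrt{d}\,\max_{i \in \{1,\ldots,d\}} (B^T_i - b^T_i) \leq \sqrt{d}\,\frac{\max_{i} (u_i - l_i)}{(\nicefrac{3}{2})^T}.
\end{equation*}
Requiring the right-hand side to fall below $2\varepsilon$ and solving for $T$ yields $T=\mathcal{O}\!\left(\log_{\nicefrac{3}{2}}\!\left(\nicefrac{1}{\varepsilon}\right)\right)$: the factor $\sqrt{d}$ and the initial diameter only contribute additive lower-order terms inside the logarithm. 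At that point $\|B^T-b^T\|\le 2\varepsilon$, so the current cube — which by Theorem~2 still contains $x^*$ — has diameter at most $2\varepsilon$, and the point reported by the algorithm lies within $\varepsilon$ of $x^*$; this establishes the first claim.

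Third, for the per-iteration oracle cost: a single outer iteration runs the inner \texttt{for}-loop exactly $d$ times, and each pass evaluates $f$ at the $n+1=16$ grid points $b_i + j\cdot\nicefrac{(B_i-b_i)}{n}$, $j=0,\dots,n$ (with the remaining coordinates frozen at $m_j$), together with $\mathcal{O}(1)$ arithmetic for the updates of $m_i,b_i,B_i$. Hence the number of zeroth-order oracle calls per outer iteration is $16d=\mathcal{O}(d)$, proving the second claim.

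The argument is almost entirely routine. The only point deserving care is the reduction from the $\ell_2$-based \texttt{while} condition to the coordinatewise contraction supplied by Theorem~2 — i.e. checking that the $\sqrt{d}$ overhead is harmless and only shifts the threshold by an additive $\log\sqrt{d}$ — together with the bookkeeping observation that, since every outer iteration performs the same fixed work, iteration complexity and total oracle complexity decouple into the product $\mathcal{O}\!\left(d\log_{\nicefrac{3}{2}}(\nicefrac{1}{\varepsilon})\right)$.
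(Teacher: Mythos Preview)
Your proposal is correct and follows essentially the same argument as the paper: use Theorem~2 to get the coordinatewise contraction $\max_i(B_i^T-b_i^T)\le(\nicefrac{3}{2})^{-T}\max_i(u_i-l_i)$, pass to the Euclidean norm via the $\sqrt{d}$ factor, and read off $T=\mathcal{O}(\log_{3/2}(1/\varepsilon))$; the oracle count is immediate from the $d$ inner passes of $n+1=16$ evaluations each. The paper's proof is slightly terser (it writes the squared chain $\|B^T-b^T\|^2\le d(\max_i\cdot)^2\le\varepsilon^2$ in one line), but the content is identical.
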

\begin{proof}

To prove the first statement, we write the simple chain for $B$ and $b$ after $T$ iteration:
\begin{equation*}
    \|B^T - b^T \|^2 \leq d \left(\max_{i \in \{1,\ldots d \}} B^T_i - b^T_i \right)^2 \leq d \left(\frac{ \max_{i \in \{1,\ldots d \}} [u_i - l_i]}{\nicefrac{3}{2}^T} \right)^2 \leq \varepsilon^2,
\end{equation*}
where $u$ and $l$ -- starting cube boundaries. This implies the required statement. 

The second statement follows immediately from the description of the algorithm.
\EndProof
\end{proof}

\textbf{Remark.} One can note that if the current $i$th variable has a small edge compared to the rest, then we essentially do the internal iteration over it in vain, because we shrink the current cube edge using the length of the maximum one. Therefore, {\tt Direction BBS} algorithm can be modified as follows: to remove the inner loop (with for) and to iterate the main loop (with while) in the direction that has the longest edge at the moment, this approach eliminates the case when we wastefully consider a small edge in the presence of a large one.

\subsection{Practical application}

\begin{wrapfigure}[18]{r}{0.5\linewidth} 
\vspace{-3ex}
\includegraphics[width =  1\linewidth]{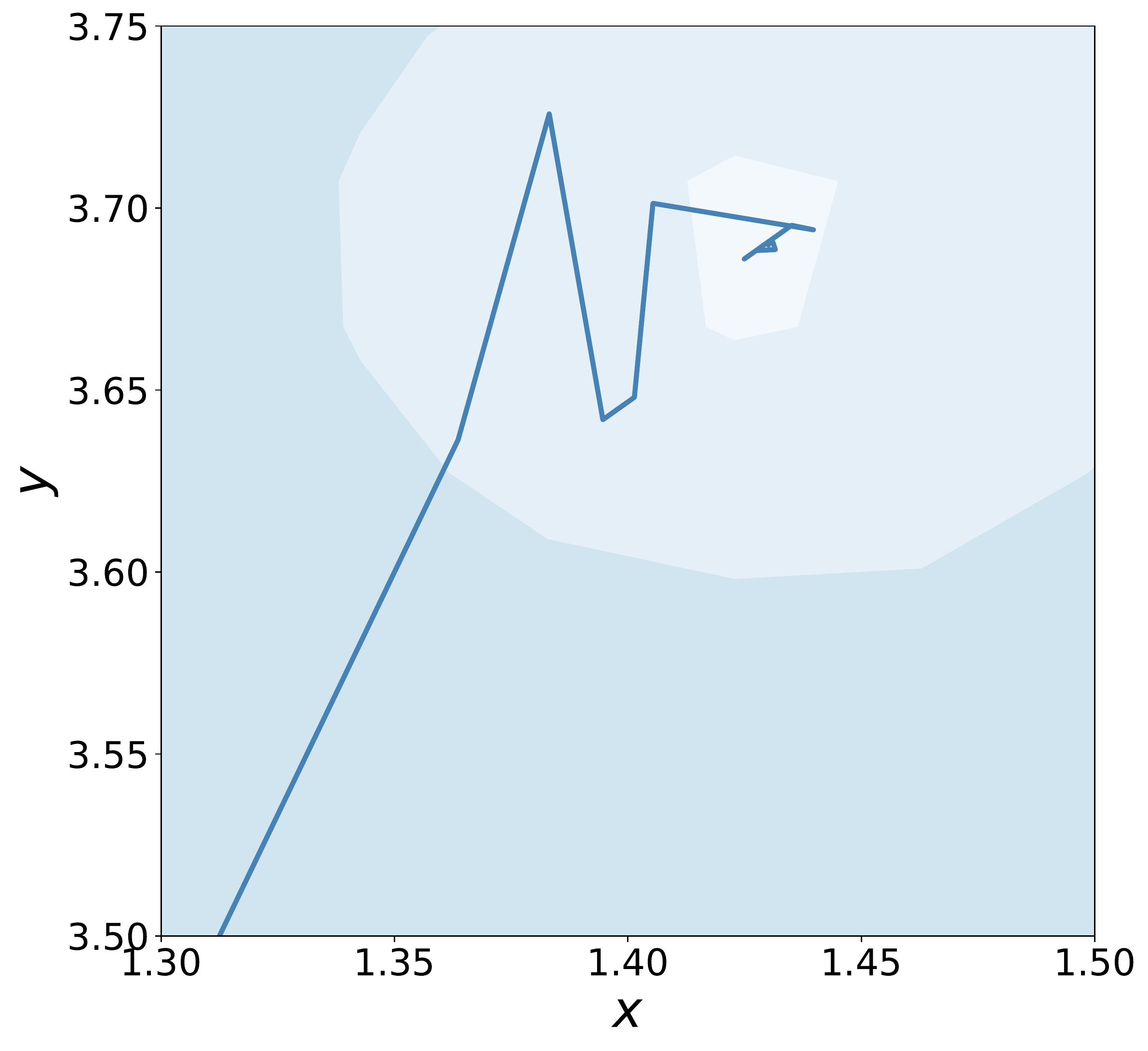}
\caption{Convergence trajectory of {\tt Direction BBS} for $d=2$.}
\label{fig:4}
\end{wrapfigure}

The function $f(x)$ from \eqref{good_good} is created in the following way: we select some point $x^*$, and also generate the values of the function $\delta (x)$ uniformly on the segment $[-\Delta ,\Delta]$ and 
independent of values in other points, then it is easy to construct $f(x)$ with $M = 20$. In the experiment, we do not restore the $\delta (x)$ function completely, we generate values only at the required points.

We start with a problem of dimension 2. The optimal point $x^* = (1.43, 3.69)$, bounds $l_i = -10, u_i = 10$ (for $i=1,2$). The convergence of the method is shown in Figure \ref{fig:4}.

Next, we used our algorithm for 10- and 100-dimensional problems with $x^* = (1, \ldots, 1)$, $l_i = -10, u_i = 10$ (for all $i$). The result of the convergence of the display in Figure \ref{fig:5}.

The results confirm the linear convergence stated in the theory.

\begin{figure}[h]
\centering
\includegraphics[width =  1\linewidth]{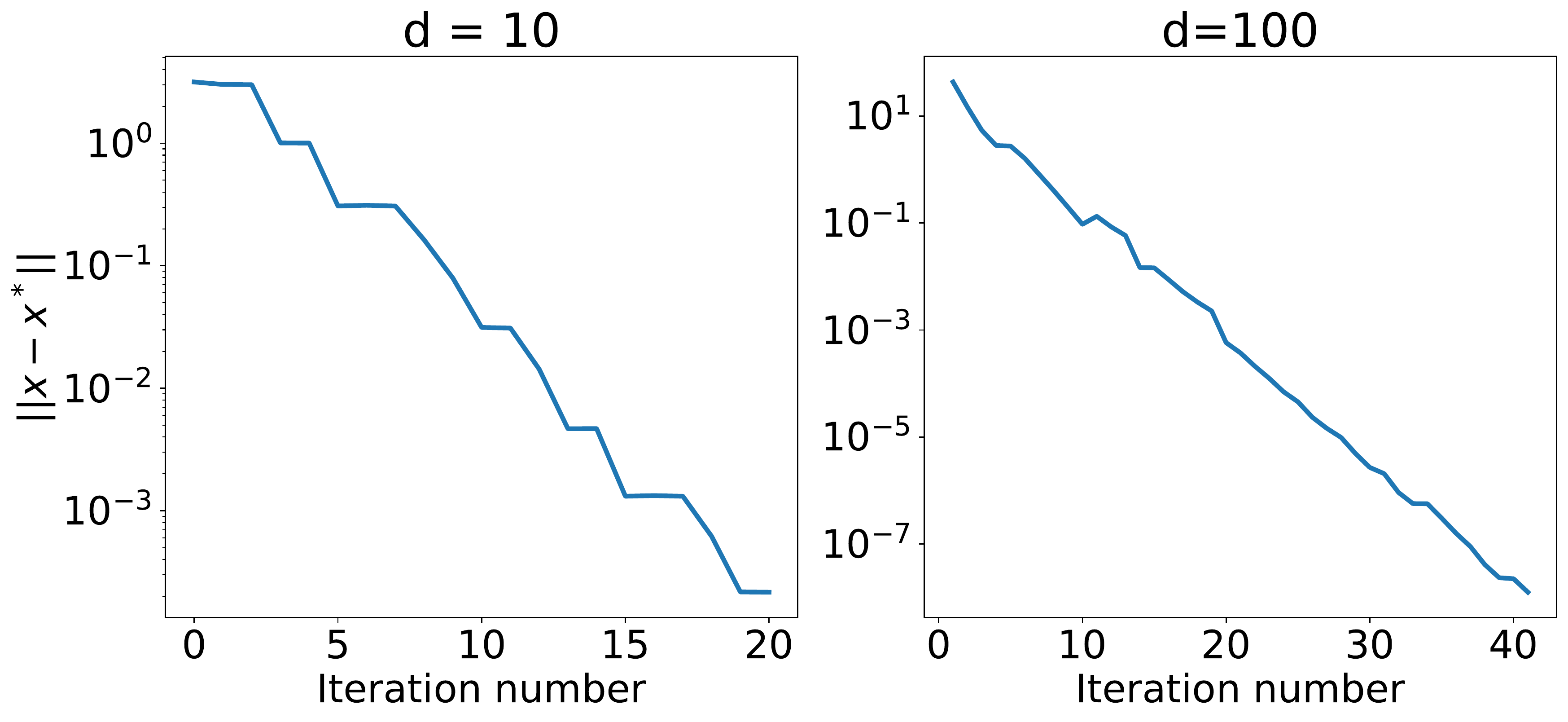}
\begin{minipage}{0.49\textwidth}
\begin{center}
~~~~~~~~~~~~~~~~~~(a)
\end{center}
\end{minipage}
\begin{minipage}{0.49\textwidth}
\begin{center}
~~~~~~(b)
\end{center}
\end{minipage}
\caption{Convergence of the {\tt Direction BBS} method in the case when the dimension of the problem is: (a) 10, (b) 100}
\label{fig:5}
\end{figure}

\vspace{\baselineskip}

Now let's move on to the second part of the paper.

\section{Another view of "good" functions}

Recall that in this section we minimize the function $\frac{1}{2}(x-x^*)^T A (x - x^*)$ with positive definite matrix $A$. One can note that our problem is $\mu$ -strongly-convex and have $L$ Lipshitz gradient. We have access to noise oracle:
\begin{equation*}
    f(x, \xi) = \frac{1}{2}(x-x^*)^TA (x-x^*) + (\xi + \delta(x)) \|x-x^* \|,
\end{equation*}
where random variable $\xi$ does not depend on the point $x$ and is generated randomly so that $\EE \xi = 0$ and $\EE \xi^2 \leq \sigma^2$, $|\delta(x)| \leq \Delta$ for all $x$. 
Then, with the help of such an oracle, one can restore the gradient using \eqref{grad_fd} in the following form:
\begin{eqnarray}
\label{grad_q}
g(x,\xi^{\pm},\tau,e) &=& n \langle A (x-x^*) , e\rangle e +
\frac{n}{2\tau} \Big((\xi^+ + \delta(x+\tau e)) \|x+\tau e-x^* \| \nonumber\\
&&- (\xi^- + \delta(x-\tau e)) \|x-\tau e-x^* \|\Big)e, 
\end{eqnarray}
where $e$ -- random vector uniformly distributed on the Euclidean sphere.
Then with this oracle we can run the classic Gradient Descent:  
    
\begin{algorithm}[H]
\caption{{\tt zoGD}}
	\label{alg4}
\begin{algorithmic}
\State 
\noindent {\bf Input:} Number of iterations $K$, parameters $\gamma_k$, $\tau_k$.
\For{$k =1,\ldots,K$}
    \State Generate independently $\xi^{\pm}_k, e_k$,
    \State $x_{k+1} = x_k - \gamma_k g(x_k,\xi^{\pm}_k,\tau_k,e_k)$.
\EndFor
\State 
\noindent {\bf Output:} $x_{K+1}$.
\end{algorithmic}
\end{algorithm}

\subsection{Theoretical analysis}

Here we prove the convergence theorem and corollaries.

\begin{theorem} The following estimate for the iteration of Algorithm \ref{alg4} is valid
\begin{eqnarray*}
\EE\left[\|x_{k+1} - x^* \|^2\right] 
&\leq& \left(1 - \gamma_k\mu + \frac{5d^2\gamma_k^2(\Delta^2+\sigma^2)}{\tau_k^2}\right)\EE\left[ \|x_{k} - x^* \|^2\right] \nonumber\\
&&+ \frac{2d\gamma_k\Delta}{\tau_k} \EE\left[\|x_k-x^* \|\right] + 2d\gamma_k\Delta + 5d^2\gamma_k^2(\Delta^2+\sigma^2).
\end{eqnarray*}
\end{theorem}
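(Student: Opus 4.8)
The plan is to analyze one step of the iteration $x_{k+1} = x_k - \gamma_k g(x_k, \xi_k^\pm, \tau_k, e_k)$ by expanding the squared norm. Writing $r_k = x_k - x^*$, we have
\[
\|x_{k+1} - x^*\|^2 = \|r_k\|^2 - 2\gamma_k \langle g(x_k, \xi_k^\pm, \tau_k, e_k), r_k\rangle + \gamma_k^2 \|g(x_k, \xi_k^\pm, \tau_k, e_k)\|^2,
\]
and then take the conditional expectation over $\xi_k^\pm$ and $e_k$ (which are independent of $x_k$ and of each other). So the two quantities to control are $\EE[\langle g, r_k\rangle]$ and $\EE[\|g\|^2]$.

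\textbf{Handling the inner product.} The gradient estimate splits as $g = n\langle A r_k, e\rangle e + (\text{noise term})e$. For the first piece, using $\EE[\langle A r_k, e\rangle \langle e, r_k\rangle] = \EE[ e^T A r_k \, r_k^T e] = \tfrac{1}{n} r_k^T A r_k$ (since $\EE[ee^T] = \tfrac1n I$ on the sphere), the first piece contributes exactly $r_k^T A r_k \geq \mu \|r_k\|^2$, giving the leading $-\gamma_k \mu \|r_k\|^2$ term after the factor $-2\gamma_k$… actually one factor of $2$ is absorbed because the finite-difference symmetrization already produces the full $\langle A r_k, e\rangle$ rather than half of it; I will track this constant carefully. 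For the noise piece, $\EE[\xi^\pm] = 0$ kills the stochastic part, and the deterministic part is bounded using $|\delta| \leq \Delta$ together with the Lipschitz-type bound $\big|\,\|x + \tau e - x^*\| - \|x - \tau e - x^*\|\,\big| \leq 2\tau$, so that $\tfrac{n}{2\tau}|\delta(\cdot)\|x+\tau e - x^*\| - \delta(\cdot)\|x - \tau e - x^*\|| \le$ something of order $\tfrac{n\Delta}{\tau}(\|r_k\| + \tau)$; pairing with $\langle e, r_k\rangle$ and using $|\langle e, r_k\rangle| \le \|r_k\|$ and $\EE|\langle e, r_k\rangle| \le \|r_k\|$ yields the cross terms $\tfrac{d\gamma_k \Delta}{\tau_k}\|r_k\|$ and $d\gamma_k \Delta$ (after replacing the sphere constant $n$ by $d$ in the final bookkeeping, which is where the dimension enters linearly).

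\textbf{Handling $\EE[\|g\|^2]$.} Since $\|e\| = 1$, $\|g\|^2 = \big(n\langle A r_k, e\rangle + \tfrac{n}{2\tau}(\text{noise difference})\big)^2 \le 2 n^2 \langle A r_k, e\rangle^2 + 2\big(\tfrac{n}{2\tau}\big)^2(\text{noise difference})^2$. The first term, after $\EE$, is of order $n L^2 \|r_k\|^2$ — but crucially, for the theorem as stated we only need this to be absorbable; I expect the clean way is to note that in fact $n\langle A r_k, e\rangle = $ the "true" directional derivative, and one keeps this inside the $(1 - \gamma_k \mu)$ coefficient only to the extent needed. More carefully: the $\tau_k^{-2}$ terms in the statement come entirely from squaring the noise difference $\tfrac{n}{2\tau}(\xi^+\|x+\tau e - x^*\| - \xi^-\|x - \tau e - x^*\|)$: using $(a-b)^2 \le 2a^2 + 2b^2$, independence, $\EE(\xi^\pm)^2 \le \sigma^2$, and $\|x \pm \tau e - x^*\|^2 \le 2\|r_k\|^2 + 2\tau^2$, one gets a bound of order $\tfrac{d^2(\sigma^2 + \Delta^2)}{\tau_k^2}(\|r_k\|^2 + \tau_k^2)$, which after multiplying by $\gamma_k^2$ produces exactly the $\tfrac{5 d^2 \gamma_k^2(\Delta^2 + \sigma^2)}{\tau_k^2}\|r_k\|^2$ coefficient and the additive $5d^2\gamma_k^2(\Delta^2+\sigma^2)$ term (the constant $5$ accommodating the $A$-dependent piece $n^2\langle A r_k, e\rangle^2$ as well, provided one also implicitly uses a relation like $L \lesssim \tau_k^{-1}\sqrt{\sigma^2+\Delta^2}$, or more likely the $A$-term is simply folded in because $\langle A r_k, e\rangle^2 \le L^2\|r_k\|^2$ and the authors silently bound $L$ — I would check which normalization they intend). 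Collecting everything and recombining the $\|r_k\|^2$ coefficients gives the claimed recursion.

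\textbf{The main obstacle} I anticipate is matching the exact numerical constants (the $5$, the $2$'s, and the factor turning $n = $ sphere-constant into $d$) and, relatedly, pinning down how the $n^2\langle A r_k, e\rangle^2$ term from $\EE\|g\|^2$ is absorbed — it is genuinely $O(dL^2\|r_k\|^2)$ and does not vanish, so either the theorem implicitly assumes a scaling of $\tau_k$, or $A$ is normalized (e.g.\ $L \le $ something), or that term must be re-examined and shown to be dominated by the $\tau_k^{-2}$ term under the standing hypotheses; resolving this cleanly is the crux, while everything else is the routine expand-and-bound bookkeeping sketched above.
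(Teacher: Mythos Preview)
Your overall plan is the right one and tracks the paper's proof closely: expand the square, take expectation, split $g$ into the ``true'' directional term $d\langle A r_k,e\rangle e$ plus five noise pieces, use $\EE[ee^T]=\tfrac1d I$ for the inner product, and use the five-term inequality $(\sum_{i=1}^5 a_i)^2\le 5\sum a_i^2$ together with $\EE_e\|x_k\pm\tau_k e-x^*\|^2 \le 2\|r_k\|^2+2\tau_k^2$ for the squared norm. (Also: in this section of the paper $n$ and $d$ are the same number, so there is no ``sphere constant vs.\ dimension'' issue to resolve.)

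The genuine gap is exactly the point you flag as the main obstacle, and none of your three guesses is how it is handled. The term $5d\gamma_k^2\EE\|A r_k\|^2$ is \emph{not} dominated by the $\tau_k^{-2}$ terms, it is not removed by a normalization of $A$, and it is not absorbed via a scaling of $\tau_k$. The paper's mechanism is the standard strong-convexity/smoothness pairing: write $\nabla f(x_k)=A r_k$ and use
\[
\langle \nabla f(x_k),\,r_k\rangle \;\ge\; f(x_k)-f(x^*) + \tfrac{\mu}{2}\|r_k\|^2,
\qquad
\|\nabla f(x_k)\|^2 \;\le\; 2L\bigl(f(x_k)-f(x^*)\bigr),
\]
so that
\[
-2\gamma_k\langle \nabla f(x_k),r_k\rangle + 5d\gamma_k^2\|\nabla f(x_k)\|^2
\;\le\; -\gamma_k\mu\|r_k\|^2 \;-\; 2\gamma_k\bigl(1-5d\gamma_k L\bigr)\bigl(f(x_k)-f(x^*)\bigr).
\]
Under the stepsize condition $\gamma_k\le \tfrac{1}{5dL}$ (used in the proof and stated explicitly in the Corollary), the last term is nonpositive and is simply dropped. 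This is also why the final contraction factor is $1-\gamma_k\mu$ rather than $1-2\gamma_k\mu$: one ``half'' of the strong-convexity inequality is spent producing the $f(x_k)-f(x^*)$ slack that cancels $\|\nabla f\|^2$. Once you plug this in, the rest of your bookkeeping goes through as written.
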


\begin{proof}
See proof in Appendix. 
\end{proof}

\begin{corollary} If $\Delta = 0$, $\gamma_k = \gamma \leq \frac{1}{5dL}$ and $\tau_k \geq \sqrt{\frac{2d \sigma^2 }{\mu L}}$ then
\begin{eqnarray*}
\EE\left[\|x_{K+1} - x^* \|^2\right] 
&\leq& \left(1 - \frac{\gamma\mu}{2}\right)^K\|x_{0} - x^* \|^2 +\frac{10d^2\gamma\sigma^2}{\mu}.
\end{eqnarray*}
Additionally, if $\gamma = \min\left\{\frac{1}{5dL}; \frac{2\ln\left(\max\left\{2; \frac{\mu^2\|z^0 - z^*\|^2 K}{20d^2\sigma^2} \right\}\right)}{\mu K}\right\}$, then
\end{corollary}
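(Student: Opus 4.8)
The plan is to take the corollary from Theorem~4 that is already established, namely the recursion with $\Delta=0$ giving
\begin{equation*}
\EE\left[\|x_{K+1} - x^* \|^2\right]
\leq \left(1 - \frac{\gamma\mu}{2}\right)^K\|x_{0} - x^* \|^2 +\frac{10d^2\gamma\sigma^2}{\mu},
\end{equation*}
valid for any constant step $\gamma\le \frac{1}{5dL}$ (with $\tau_k$ chosen large enough). The final statement is just the standard step-size tuning lemma applied to this two-term bound: the first term decays geometrically and shrinks with smaller $\gamma$ in the exponent, while the second term is linear in $\gamma$, so one balances them by choosing $\gamma$ as small as the constraint $\gamma\le\frac{1}{5dL}$ allows, except that when $K$ is large one prefers the slightly larger logarithmic value $\gamma\sim\frac{\ln(\cdot)}{\mu K}$ to force the first term down to the order of the second.

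Concretely I would argue by cases on which term achieves the minimum in the definition of $\gamma$. In the regime where $\gamma = \frac{1}{5dL}$ (i.e.\ $K$ is small, the logarithm is dominated by the $\frac{1}{5dL}$ branch), one just plugs this in: the exponential term is $\left(1-\frac{\mu}{10dL}\right)^K\|x_0-x^*\|^2$, which we bound by $\exp\!\left(-\frac{\mu K}{10dL}\right)\|x_0-x^*\|^2$ using $1-t\le e^{-t}$, and the residual term becomes $\frac{2d\sigma^2}{\mu L}$. In the regime where $\gamma = \frac{2\ln(\max\{2;\,\cdot\})}{\mu K}$, write $R=\frac{\mu^2\|x_0-x^*\|^2 K}{20d^2\sigma^2}$; then $\left(1-\frac{\gamma\mu}{2}\right)^K\le e^{-\gamma\mu K/2} = e^{-\ln(\max\{2,R\})} = \frac{1}{\max\{2,R\}}\le \frac{1}{R}$, so the first term is at most $\frac{20d^2\sigma^2}{\mu^2 K}$, and the second term $\frac{10d^2\gamma\sigma^2}{\mu}$ is $\frac{20d^2\sigma^2\ln(\max\{2,R\})}{\mu^2 K}$. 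Combining, one gets a bound of the form
\begin{equation*}
\EE\left[\|x_{K+1}-x^*\|^2\right] = \cO\!\left(\exp\!\left(-\frac{\mu K}{10dL}\right)\|x_0-x^*\|^2 + \frac{d^2\sigma^2}{\mu^2 K}\ln\!\left(\max\left\{2;\,\frac{\mu^2\|x_0-x^*\|^2 K}{20d^2\sigma^2}\right\}\right)\right),
\end{equation*}
which is the usual linear-convergence-plus-$\tilde\cO(1/K)$-noise-floor statement; the $\exp(-\cdot)$ term is what the paper advertises as the "linear rate."

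The steps in order: (i) invoke the already-proven corollary to reduce to the scalar inequality above; (ii) use $1+t\le e^t$ to replace $(1-\gamma\mu/2)^K$ by $e^{-\gamma\mu K/2}$; (iii) split into the two cases for $\gamma$ and in each case substitute and simplify, using in the logarithmic case that $e^{-\gamma\mu K/2}=1/\max\{2,R\}$ exactly by construction; (iv) collect the two cases into a single $\cO$/$\tilde\cO$ bound, noting that in the first case the $1/K$ term is anyway dominated because $K$ is small there. There is essentially no obstacle here — this is a routine tuning computation — but the one point requiring a little care is the $\max\{2;\cdot\}$ guard: it is there precisely so that the logarithm is never negative (so $\gamma>0$) and so that the inequality $e^{-\gamma\mu K/2}\le 1/R$ and the simplification of the residual term both go through cleanly when $R<2$; I would make sure the case analysis explicitly handles $R\le 2$ by noting the bound then collapses to the $\gamma=\frac{1}{5dL}$ analysis up to constants. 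I would also remark that the constraint $\tau_k\ge\sqrt{2d\sigma^2/(\mu L)}$ carries over unchanged since it does not involve $\gamma$.
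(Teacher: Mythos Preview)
The paper itself supplies no proof of this corollary; it is stated immediately after Theorem~4 and followed only by the remark ``It gives the results similar to SGD convergence.'' Your case analysis for the ``Additionally'' part is the standard tuning argument and is correct: bounding $(1-\gamma\mu/2)^K\le e^{-\gamma\mu K/2}$, splitting on which branch of the $\min$ realizes $\gamma$, and in the logarithmic branch using $e^{-\gamma\mu K/2}=1/\max\{2,R\}$ by construction is exactly how one extracts the $\tilde\cO$ bound.

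One small gap: you invoke the first displayed inequality as ``already established,'' but Theorem~4 only gives the one-step recursion, not the unrolled bound. You still need to derive the first half of the corollary: set $\Delta=0$ in Theorem~4 to get
\[
\EE\|x_{k+1}-x^*\|^2 \le \Bigl(1-\gamma\mu+\tfrac{5d^2\gamma^2\sigma^2}{\tau_k^2}\Bigr)\EE\|x_k-x^*\|^2 + 5d^2\gamma^2\sigma^2,
\]
then use $\tau_k^2\ge \tfrac{2d\sigma^2}{\mu L}$ together with $\gamma\le\tfrac{1}{5dL}$ to bound $\tfrac{5d^2\gamma^2\sigma^2}{\tau_k^2}\le\tfrac{5d\gamma^2\mu L}{2}\le\tfrac{\gamma\mu}{2}$, so the contraction factor is $1-\tfrac{\gamma\mu}{2}$; unrolling and summing $\sum_j(1-\gamma\mu/2)^j\le\tfrac{2}{\gamma\mu}$ yields the $\tfrac{10d^2\gamma\sigma^2}{\mu}$ residual. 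With these two lines inserted your argument is complete and matches what the authors evidently intend.
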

\begin{eqnarray*}
\EE\left[\|x_{K+1} - x^* \|^2\right] 
&\leq& \mathcal{\tilde O}\left(\exp\left(- \frac{\mu K}{20 d L}\right)\|x_{0} - x^* \|^2 +\frac{20d^2\sigma^2}{\mu^2 K}\right).
\end{eqnarray*}
It gives the results similar to SGD convergence. 

\subsection{Practical application}

We consider oracle \eqref{good_bad} with $n =50$, $L=100$, $\mu = 1$ and $\sigma = 1, 2, 5, 10, 20, 100$, $\Delta = 0$. In Algorithm \ref{alg4} we use constant $\gamma = \frac{1}{dL}$ and $\tau = \sqrt{\frac{2d \sigma^2 }{\mu L}}$. See convergence of Algorithm \ref{alg4} with different $\sigma$ on Figure \ref{fig:8}.

\begin{figure}[h!]
\centering
\begin{minipage}{0.98\textwidth}
\includegraphics[width =  \textwidth]{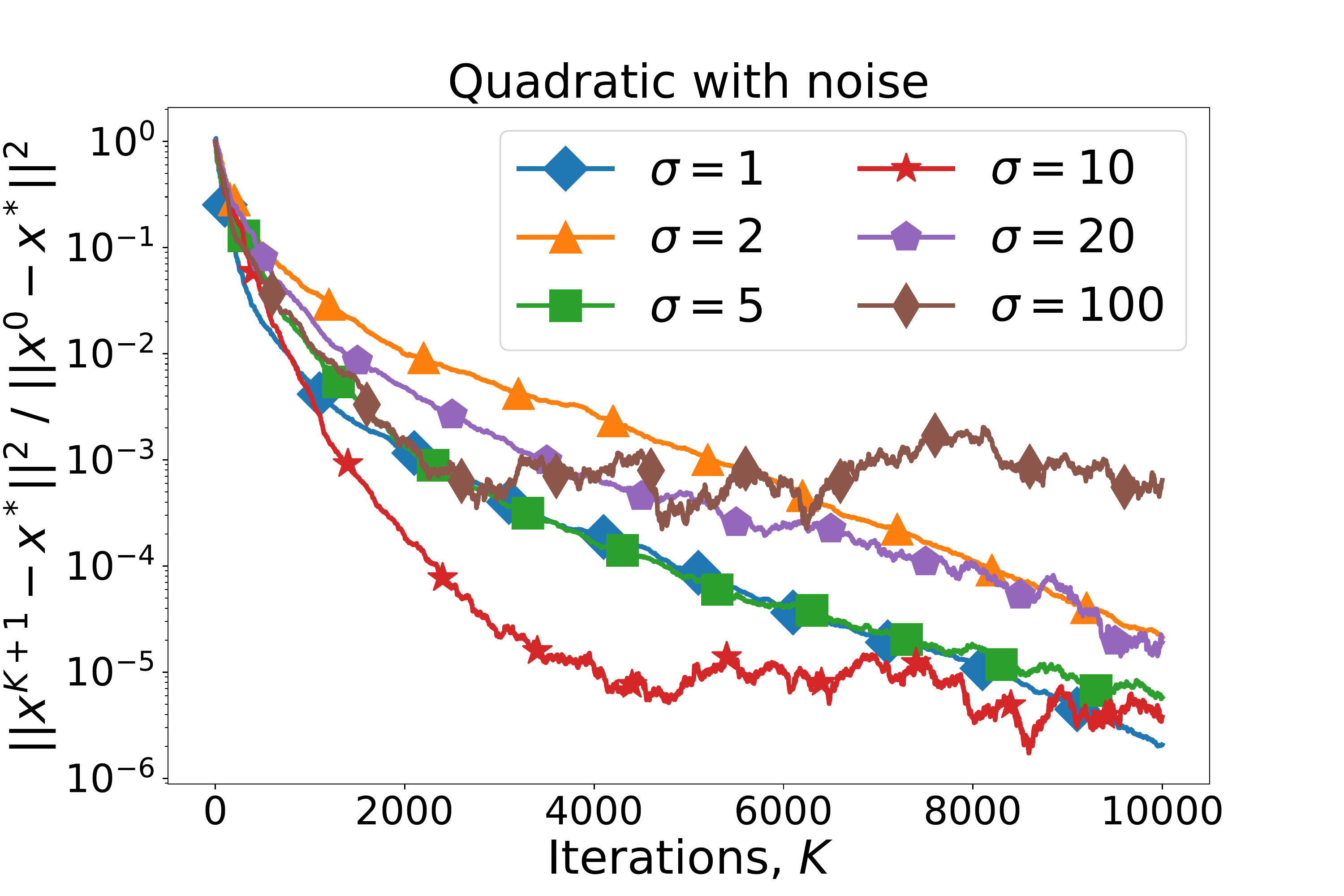}
\end{minipage}%
\caption{Convergence of Algorithm \ref{alg4} with oracle \eqref{good_bad} at different noise levels.}
\label{fig:8}
\end{figure}

\section{Conclusion} \label{concl}

In this paper, we presented methods for a non-convex problem that can be approximated by a symmetric parabolic function. Our first method is suitable for a larger class of problems, but the complexity of its iterations grows exponentially depending on the dimension of the problem. The second method is intended for a restricted set of functions, but it does not so dramatically depend on the dimension.

For future work, we highlight the following areas. It is important to understand whether the results obtained are valid for the problem approximated by arbitrary quadratic functions. It is also interesting to consider other types of functions (not only parabolic and quadratic), as well as their various combinations.

In this setting, the noise level depends on the distance to the solution. This concept of noise is new, it is usually assumed that the noise (or its second moment) is uniformly bounded \cite{akhavan2020exploiting,NIPS2016_186fb23a}. 

We also presented the minimization of a quadratic function using Gradient Descent with a zero-order inexact oracle. Moreover, the noise (inexactness) in this oracle is proportional to the distance to the solution.

In further work, we would like to shift this analysis to arbitrary functions (not necessarily quadratic), and also consider other concepts of noise depending on the value of the function/distance to the solution.

\bibliographystyle{splncs04}
\bibliography{literature}

\appendix



\section{Proof for Section 4}

\begin{proof} We start with the step of our method:
\begin{eqnarray*}
\|x_{k+1} - x^* \|^2 &=& \|x_{k} - \gamma_k g(x_k,\xi^{\pm}_k,\tau_k,e_k) - x^* \|^2 \nonumber\\
&=& \|x_{k} - x^* \|^2 - 2\gamma_k \langle g(x_k,\xi^{\pm}_k,\tau_k,e_k), x_k  - x^*\rangle  \nonumber\\
&&+ \gamma_k^2 \|g(x_k,\xi^{\pm}_k,\tau_k,e_k)\|^2.
\end{eqnarray*}
Taking the full expectation and taking into account that $z_{k} - z^*$ does not depend on $e_k, \xi_k$, we get
\begin{eqnarray}
\label{temp1}
\EE\left[\|x_{k+1} - x^* \|^2\right] &=& \EE\left[ \|x_{k} - x^* \|^2\right] - 2\gamma_k \EE\left[\langle g(x_k,\xi^{\pm}_k,\tau_k,e_k), x_k  - x^*\rangle \right]  \nonumber\\
&&+ \gamma_k^2 \EE\left[\|g(x_k,\xi^{\pm}_k,\tau_k,e_k)\|^2\right] \nonumber\\
&=& \EE\left[ \|x_{k} - x^* \|^2\right] - 2\gamma_k \EE\left[\langle \EE_{\xi_k,e_k}\left[g(x_k,\xi^{\pm}_k,\tau_k,e_k)\right], x_k  - x^*\rangle \right]  \nonumber\\
&&+ \gamma_k^2 \EE\left[\|g(x_k,\xi^{\pm}_k,\tau_k,e_k)\|^2\right].
\end{eqnarray}
We use that $\EE[d\langle s, e\rangle e] = s$. Next we need to estimate $\EE\left[\langle g(x_k,\xi^{\pm}_k,\tau_k,e_k), x_k  - x^*\rangle \right]$ and $\EE\left[\|g(x_k,\xi^{\pm}_k,\tau_k,e_k)\|^2\right]$. We start with $\EE\left[\langle g(x_k,\xi^{\pm}_k,\tau_k,e_k), x_k  - x^*\rangle \right]$:
\begin{eqnarray}
\label{temp2}
\EE_{\xi_k,e_k}\left[g(x_k,\xi^{\pm}_k,\tau_k,e_k)\right] &=& \EE_{e_k}\left[d \langle A (x_k-x^*) , e_k\rangle e_k\right]  \nonumber\\
&&+
\frac{d}{2\tau_k} \EE_{\xi_k,e_k}\Big[\Big((\xi^+_k + \delta(x_k+\tau_k e_k)) \|x_k+\tau_k e_k-x^* \| \nonumber\\
&&- (\xi_k^- + \delta(x_k-\tau_k e_k)) \|x_k-\tau_k e_k-x^* \|\Big)e_k\Big] \nonumber\\
&=& A (x_k-x^*) +
\frac{d}{2\tau_k} \EE_{e_k}\Big[\Big(\delta(x_k+\tau_k e_k)\|x_k+\tau_k e_k-x^* \| \nonumber\\
&&- \delta(x_k-\tau_k e_k)\|x_k-\tau_k e_k-x^* \|\Big)e_k\Big].
\end{eqnarray}
 Let's  work with $\EE\left[\|g(x_k,\xi^{\pm}_k,\tau_k,e_k)\|^2\right]$:
\begin{eqnarray*}
\EE\left[\|g(x_k,\xi^{\pm}_k,\tau_k,e_k)\|^2\right] &=& \EE\Big[\EE_{e_k}\big[\|d \langle A (x_k-x^*) , e_k\rangle e_k \|^2\big] \nonumber\\
&&+ \frac{d}{2\tau_k} \Big((\xi_k^+ + \delta(x_k+\tau_k e_k)) \|x_k+\tau_k e_k-x^* \| \nonumber\\
&&- (\xi_k^- + \delta(x_k-\tau_k e_k)) \|x_k-\tau_k e_k-x^* \|\Big)e_k\|^2\Big] \nonumber\\
&\leq& 5n^2\EE\left[\EE_{e_k}\big[\|d \langle A (x_k-x^*) , e_k\rangle e_k \|^2\big]\right] \nonumber\\
&&+ \frac{5d^2}{4\tau_k^2} \EE\left[(\xi_k^+)^2\|x_k+\tau e_k-x^* \|^2\|e_k\|^2\right]
\nonumber\\
&&+ \frac{5d^2}{4\tau_k^2} \EE\left[\delta_k^2(x_k+\tau_k e_k)\|x_k+\tau e_k-x^* \|^2\|e_k\|^2\right]
\nonumber\\
&&+ \frac{5d^2}{4\tau_k^2} \EE\left[(\xi_k^-)^2\|x-\tau_k e_k-x^* \|^2\|e_k\|^2\right]
\nonumber\\
&&+ \frac{5d^2}{4\tau_k^2} \EE\left[\delta^2(x_k-\tau_k e_k)\|x_k-\tau_k e_k-x^* \|^2\|e_k\|^2\right].
\end{eqnarray*}
Next we use Lemma B.10 from \cite{bogolubsky2016learning}: $\EE[|\langle s, e\rangle|^2] = \frac{1}{d}\|s\|^2$
for some vector $s$ and independent $e$ -- random vector uniformly distributed on the Euclidean sphere.
\begin{eqnarray}
\label{temp3}
\EE\left[\|g(x_k,\xi^{\pm}_k,\tau_k,e_k)\|^2\right] &\leq& 5d\EE\left[\|A (x_k-x^*) \|^2\right] \nonumber\\
&&+ \frac{5d^2}{4\tau_k^2} \EE\left[\EE_{\xi_k}\left[(\xi_k^+)^2\right]\|x_k+\tau_k e_k-x^* \|^2\right]
\nonumber\\
&&+ \frac{5d^2\Delta^2}{4\tau^2} \EE\left[\|x_k+\tau_k e_k-x^* \|^2\right]
\nonumber\\
&&+ \frac{5d^2}{4\tau_k^2} \EE\left[\EE_{\xi_k}\left[(\xi_k^-)^2\right]\|x_k-\tau_k e_k-x^* \|^2\right]
\nonumber\\
&&+ \frac{5d^2\Delta^2}{4\tau_k^2} \EE\left[\|x_k-\tau_k e_k-x^* \|^2\right] \nonumber\\
&\leq& 5n\EE\left[\|A (x_k-x^*)\|^2\right] \nonumber\\
&&+ \frac{5d^2 \sigma^2}{2\tau_k^2} \EE\left[\|x_k-x^* \|^2 + \|\tau_k e_k\|^2\right]
\nonumber\\
&&+ \frac{5d^2\Delta^2}{2\tau_k^2} \EE\left[\|x_k-x^* \|^2 + \|\tau_k e_k\|^2\right]
\nonumber\\
&&+ \frac{5d^2 \sigma^2}{2\tau_k^2} \EE\left[\|x_k-x^* \|^2 + \|\tau_k e_k\|^2\right]
\nonumber\\
&&+ \frac{5d^2\Delta^2}{2\tau_k^2} \EE\left[\|x_k-x^* \|^2 + \|\tau_k e_k\|^2\right]\nonumber\\
&\leq& 5d\EE\left[\|A (x_k-x^*)\|^2\right] + \frac{5d^2(\Delta^2+\sigma^2)}{\tau_k^2}\EE\left[\|x_k-x^* \|^2 \right] \nonumber\\
&& + 5d^2(\Delta^2+\sigma^2).
\end{eqnarray}
Then we combine \eqref{temp1}, \eqref{temp2} and \eqref{temp3}:
\begin{eqnarray*}
\EE\left[\|x_{k+1} - x^* \|^2\right] 
&\leq& \EE\left[ \|x_{k} - x^* \|^2\right] - 2\gamma_k \EE\left[\langle A (x_k-x^*), x_k  - x^*\rangle \right] \nonumber\\
&&- \frac{d\gamma_k}{\tau_k} \EE\Big[ \langle\EE_{e_k}\Big[\Big(\delta(x_k+\tau_k e_k)\|x_k+\tau_k e_k-x^* \| \nonumber\\
&&- \delta(x_k-\tau_k e_k)\|x_k-\tau_k e_k-x^* \|\Big)e_k\Big], x_k  - x^*\rangle \Big]  \nonumber\\
&&+ 5d\gamma_k^2 \EE\left[\|A (x_k-x^*)\|^2\right] + \frac{5d^2\gamma_k^2(\Delta^2+\sigma^2)}{\tau_k^2}\EE\left[\|x_k-x^* \|^2 \right] \nonumber\\
&& + 5d^2\gamma_k^2(\Delta^2+\sigma^2).
\end{eqnarray*}
With $\nabla f(x_k) = A(x_k - x^*)$, we have
\begin{eqnarray}
\label{temp4}
\EE\left[\|x_{k+1} - x^* \|^2\right] 
&\leq& \left(1 + \frac{5d^2\gamma_k^2(\Delta^2+\sigma^2)}{\tau_k^2}\right)\EE\left[ \|x_{k} - x^* \|^2\right] \nonumber\\
&&- 2\gamma_k \EE\left[\langle \nabla f(x_k), x_k  - x^*\rangle \right] + 5d\gamma_k^2 \EE\left[\|\nabla f(x_k)\|^2\right] \nonumber\\
&&+ \frac{n\gamma_k}{\tau_k} \EE\Big[ \Big\|\EE_{e_k}\Big[\Big(\delta(x_k+\tau_k e_k)\|x_k+\tau_k e_k-x^* \| \nonumber\\
&&- \delta(x_k-\tau_k e_k)\|x_k-\tau_k e_k-x^* \|\Big)e_k\Big] \Big\| \cdot \| x_k  - x^*\| \Big]  \nonumber\\
&& + 5d^2\gamma_k^2(\Delta^2+\sigma^2).
\end{eqnarray}
We work with $ \Upsilon_k = \Big\|\EE_{e_k}\Big[\Big(\delta(x_k+\tau_k e_k)\|x_k+\tau_k e_k-x^* \|- \delta(x_k-\tau_k e_k)\|x_k-\tau_k e_k-x^* \|\Big)e_k\Big] \Big\|$:
\begin{eqnarray}
\Upsilon_k 
&\leq& \EE_{e_k}\Big[|\delta(x_k+\tau_k e_k)|\cdot\|x_k+\tau_k e_k-x^* \| + |\delta(x_k-\tau_k e_k)| \cdot\|x_k-\tau_k e_k-x^* \|\Big] \nonumber\\
&\leq& \Delta\EE_{e_k}\Big[\|x_k+\tau_k e_k-x^* \| + \|x_k-\tau_k e_k-x^* \|\Big]\nonumber\\
&\leq& 2\Delta\EE_{e_k}\Big[\|x_k-x^* \| + \tau_k\Big].
\end{eqnarray}
Connecting with \eqref{temp4}, we get
\begin{eqnarray*}
\EE\left[\|x_{k+1} - x^* \|^2\right] 
&\leq& \left(1 + \frac{5d^2\gamma_k^2(\Delta^2+\sigma^2)}{\tau_k^2}\right)\EE\left[ \|x_{k} - x^* \|^2\right] \nonumber\\
&&- 2\gamma_k \EE\left[\langle \nabla f(x_k), x_k  - x^*\rangle \right] + 5d\gamma_k^2 \EE\left[\|\nabla f(x_k)\|^2\right] \nonumber\\
&&+ \frac{2d\gamma_k\Delta}{\tau_k} \EE\left[\|x_k-x^* \|\right] + 2d\gamma_k\Delta + 5d^2\gamma_k^2(\Delta^2+\sigma^2).
\end{eqnarray*}
Using $\mu$-strong convexity and $L$-smoothness of $f$, we have
\begin{eqnarray*}
\EE\left[\|x_{k+1} - x^* \|^2\right] 
&\leq& \left(1 - \gamma_k\mu + \frac{5d^2\gamma_k^2(\Delta^2+\sigma^2)}{\tau_k^2}\right)\EE\left[ \|x_{k} - x^* \|^2\right] \nonumber\\
&& - 2\gamma_k (1 - 5d\gamma_k L) \EE\left[f(x_k)  - f(x^*) \right] \nonumber\\
&&+ \frac{2d\gamma_k\Delta}{\tau_k} \EE\left[\|x_k-x^* \|\right] + 2d\gamma_k\Delta + 5d^2\gamma_k^2(\Delta^2+\sigma^2).
\end{eqnarray*}
With $\gamma_k \leq \frac{1}{5dL}$
\begin{eqnarray*}
\EE\left[\|x_{k+1} - x^* \|^2\right] 
&\leq& \left(1 - \gamma_k\mu + \frac{5d^2\gamma_k^2(\Delta^2+\sigma^2)}{\tau_k^2}\right)\EE\left[ \|x_{k} - x^* \|^2\right] \nonumber\\
&&+ \frac{2d\gamma_k\Delta}{\tau_k} \EE\left[\|x_k-x^* \|\right] + 2d\gamma_k\Delta + 5d^2\gamma_k^2(\Delta^2+\sigma^2).
\end{eqnarray*}
\EndProof
\end{proof}


\end{document}